\title{On maximal $S$-free sets and the Helly number\\ for the family of $S$-convex sets}
\author{Gennadiy Averkov\footnote{Institute for Mathematical Optimization, Faculty of Mathematics, University of Magdeburg, 39106 Magdeburg, Germany (email: averkov@math.uni-magdeburg.de)}}
\newcommand{\real}{\mathbb{R}}
\newcommand{\natur}{\mathbb{N}}
\newcommand{\intr}{\operatorname{int}}
\newcommand{\sprod}[2]{\left<{#1}\,,\,{#2}\right>}
\newcommand{\dotvar}{\,\cdot\,}
\newcommand{\cF}{\mathcal{F}}
\newcommand{\integer}{\mathbb{Z}}
\newcommand{\setcond}[2]{\left\{ #1 \,:\, #2 \right\}}
\newcommand{\bigsetcond}[2]{\bigl\{ #1 \,:\, #2 \bigr\}}
\newcommand{\cl}{\operatorname{cl}}
\newcommand{\thmheader}[1]{{\upshape(#1).}}
\newcommand{\relint}{\operatorname{relint}}
\newcommand{\relintr}{\relint}
\newcommand{\eps}{\varepsilon}
\newcommand{\conv}{\operatorname{conv}}
\newcommand{\bd}{\operatorname{bd}}
\newcommand{\rank}{\operatorname{rank}}
\newcommand{\aff}{\operatorname{aff}}
\newcommand{\sizetwocol}[2]{\left(\begin{smallmatrix} {#1} \\ {#2} \end{smallmatrix}\right)}
\newcommand{\term}[1]{\emph{#1}}
\newtheorem{theorem}{Theorem}[section]
\newtheorem{lemma}[theorem]{Lemma}
\newtheorem{proposition}[theorem]{Proposition}
\newtheorem{definition}[theorem]{Definition}
\theoremstyle{definition}
\newtheorem{remark}[theorem]{Remark}
\numberwithin{equation}{section}
\begin{document}

\maketitle

\begin{abstract}
	We study two combinatorial parameters, which we denote by $f(S)$ and $h(S)$, associated to an arbitrary set $S \subseteq \real^d$, where $d \in \natur$. In the nondegenerate situation, $f(S)$ is the largest possible number of facets of a $d$-dimensional polyhedron $L$ such that the interior of $L$ is disjoint with $S$ and $L$ is inclusion-maximal with respect to this property. The parameter $h(S)$ is the Helly number of the family of all sets that can be given as the intersection of $S$ with a convex subset of $\real^d$.  We obtain the inequality $f(S) \le h(S)$ for an arbitrary $S$ and the equality $f(S)=h(S)$ for  every discrete $S$. Furthermore, motivated by research in integer and mixed-integer optimization, we show that $2^d$ is the sharp upper bound on $f(S)$ in the case $S = (\integer^d \times \real^n) \cap C$, where $n \ge 0$ and $C \subseteq \real^{d+n}$  is convex. The presented material generalizes and unifies results of various authors, including the result $h(\integer^d) = 2^d$ of Doignon, the related result $f(\integer^d)=2^d$ of Lov\'asz and the inequality $f(\integer^d \cap C) \le 2^d$, which has recently been proved for every convex set $C \subseteq \real^d$ by Dey \& Mor\'an.
\end{abstract}

\newtheoremstyle{itsemicolon}{}{}{\mdseries\rmfamily}{}{\itshape}{:}{ }{}

\newtheoremstyle{itdot}{}{}{\mdseries\rmfamily}{}{\itshape}{:}{ }{}

\theoremstyle{itdot}

\newtheorem*{msc*}{2010 Mathematics Subject Classification}

\begin{msc*}
	Primary 90C11;  Secondary 52A01, 52C07, 90C10
\end{msc*}

\newtheorem*{keywords*}{Keywords}

\begin{keywords*}
	cutting plane; Doignon's theorem; Helly's theorem; Helly number; intersection cut; lattice-free set; $S$-convex set; $S$-free set
\end{keywords*}

\section{Introduction}

For background information from convex geometry, theory of polyhedra and geometry of numbers we refer to \cite{MR2335496,MR893813,MR1242973,MR0274683,MR1216521,MR874114}. Let $\natur:= \{1,2,\ldots\}$ and $\natur_0 := \{0,1,2,\ldots\}$. For $m \in \natur$ we use the notation  $[m]:=\{1,\ldots,m\}$. Let $d \in \natur$ and $S \subseteq \real^d$. The origin of linear spaces (such as $\real^d$) is denoted by $o$. In this manuscript we study $S$-free sets and maximal $S$-free sets, defined as follows.

\begin{definition} \thmheader{$S$-free set and maximal $S$-free set}
	A set $L \subseteq \real^d$ is said to be \term{$S$-free} if $L$ is closed, convex and the interior of $L$ is disjoint with $S$. An $S$-free set $L$ in $\real^d$ is said to be \term{maximal $S$-free} if there exists no $S$-free set properly containing $L$. 
\end{definition}

We consider the question whether for a given $S$ there exists a bound $k \in \natur_0$ such that every maximal $S$-free set is a polyhedron with at most $k$ facets. If such a bound $k$ exists, we are interested in finding an appropriate $k$ explictly. Our question can be expressed in terms of the following functional.

\begin{definition} \thmheader{The functional $f$ -- facet complexity of maximal $S$-free sets}
	Let $S \subseteq \real^d$. If there exists $k \in \natur_0$ such that every $d$-dimensional maximal $S$-free set is a polyhedron with at most $k$ facets, we define $f(S)$ to be the minimal $k$ as above. If there exist no $d$-dimensional maximal $S$-free sets (e.g., for $S=\real^d$) we let $f(S):=-\infty$. If there exist maximal $S$-free sets which are not polyhedra or maximal $S$-free polyhedra with arbitrarily large number of facets we define $f(S):=+\infty$.
\end{definition}

Thus, in the qualitative form our question is about conditions which ensure $f(S)<+\infty$. Quantitatively, we are interested in bounds on $f(S)$. More specifically, we are interested in evaluating or estimating $f(S)$ for certain structured sets $S$ that play a role in optimization. With a view toward applications in the cutting-plane theory from integer and mixed-integer optimization it is desirable to have upper bounds on $f((\integer^d \times \real^n) \cap C)$, where $d\in \natur, n \in \natur_0$ and $C \subseteq \real^{d+n}$ is convex. See also \cite{MR2480507,MR2855866,MR0290793,conforti-et-al-survey-2011,MR2968262,MR2795192,MR2828002} for information on application of $S$-free sets for generation of cutting planes. The main topic of this manuscript is the study of the relationship between $f(S)$ and the Helly number $h(S)$ for the family of of $S$-convex sets.  The notions of $S$-convex set and the Helly number are introduced below.

\begin{definition} \label{S:conv:def} \thmheader{$S$-convex set}
	Given $S \subseteq \real^d$, a set $A \subseteq \real^d$ is called \emph{$S$-convex} if $A = S \cap C$ for some convex set $C \subseteq \real^d$. 
\end{definition}

In order to avoid possible ambiguities we point out that the literature contains a number of different generalizations of the notion of convexity (see, for example, the monographs \cite{MR1439963} and \cite{MR1234493}). In \cite[\S\S1.9]{MR1234493} the $S$-convexity as introduced here is called the relative convexity of $S$. For $S=\real^d$ the $S$-convexity is reduced to the standard convexity. The notion of $S$-convexity in the case $S=\integer^d$ was considered by various authors in different contexts (see, for example, \cite{MR1963693,MR0387090,MR1837217,MR2160045,MR1105949}). To the best of author's knowledge, the study of $S$-convexity in the case $S=\real^d \times \integer^n$ has been initiated in \cite{arXiv:1002.0948}.

\begin{definition} \thmheader{The functional $h$ -- Helly number}
	Given a nonempty family $\cF$ of sets, the \emph{Helly number} $h(\cF)$ of $\cF$ is defined as follows. For $\cF=\{\emptyset\}$ let $h(\cF) := 0$.
	If $\cF \ne \{\emptyset\}$ and there exists $k \in \natur$ such that
	\begin{align} \label{helly:cond}
		&F_1 \cap \cdots \cap F_m = \emptyset & &\Longrightarrow & & \exists i_1,\ldots,i_k \in [m] \ : \ F_{i_1}  \cap \cdots \cap F_{i_k} = \emptyset
	\end{align}
	for all $F_1, \ldots, F_m \in \cF$ ($m \in \natur$), then we define $h(\cF)$ to be the minimal $k$ as above. In all other cases we let $h(\cF):=+\infty$. For $S \subseteq \real^d$ we use the notation
	\begin{equation} \label{h:S:def}
		h(S) := h\bigl( \bigsetcond{S \cap C}{C \subseteq \real^d \ \text{is convex\,} }\bigr).
	\end{equation}
	That is, $h(S)$ is the  Helly number of the family of all $S$-convex sets.
\end{definition}

The functional $h$ given by \eqref{h:S:def} has several interpretations in terms of optimization; see \cite[Proposition~1.2]{arXiv:1002.0948}.

The main results of this manuscript, which we formulate in the next section, can be split into two groups: theorems about $f(S)$ and $h(S)$ for general sets $S$ having no particular (global) structure and, on the other hand, theorems providing bounds on $f(S)$ and $h(S)$ for structured sets $S$, whose structure is related to integer and mixed-integer programming.

For general sets $S \subseteq \real^d$, we derive the inequality $f(S) \le h(S)$ and the equality $f(S) = h(S)$ in the case that $S$ is discrete (see Theorem~\ref{main:thm}). We also relate $f(S)$ and $h(S)$ to the sequence of values  $f(S_t)$ and $h(S_t)$, respectively, in the case of set sequences $(S_t)_{t \in \natur}$ satisfying $S_t \subseteq S_{t+1} \forall t \in \natur$ and $S = \bigcup_{t \in \natur} S_t$ (see Theorem~\ref{liminf:thm}).

We show that the above results yield short and unified proofs of the equality $h(\integer^d)=2^d$ of Doignon \cite{MR0387090} (see also \cite{Bell77,MR0452678,Todd1977}), the equality $f(\integer^d) = 2^d$ of Lov\'asz \cite{arXiv:1110.1014} and the inequality $f(\integer^d \cap C) \le 2^d$ for every convex set $C \subseteq \real^d$ recently proved by Dey \& Mor\'an \cite{MR2801234}. Note that various special cases of $f(\integer^d \cap C) \le 2^d$ were derived and used as a tool for research in integer optimization in \cite[Theorem~1.1]{MR2724071}, \cite[Theorem~2]{MR2600658}, \cite[Proposition~31]{MR2735936} and  \cite[Theorem~3.2]{MR2795192}. In \cite{MR2724071} the set $C$ is assumed to be an arbitrary affine space, while in \cite{MR2600658} the set $C$ is an arbitrary rational polyhedron.

We prove that $2^d$ is the tight upper bound on $f((\integer^d \times \real^n) \cap C)$ for $d \in \natur, n \in \natur_0$ and an arbitrary convex set $C \subseteq \real^{d+n}$ (see Theorem~\ref{mixed:integer:f:thm}). The latter is a generalization of the mentioned result of Dey \& Mor\'an to the mixed-integer setting. Observe that the set $\integer^d \times \real^n$ is a Minkowski sum of the lattice $\integer^d \times \{o\}$ of rank $d$ and the linear space $\{o\} \times \real^n$ of dimension $n$. By this we are motivated to study $f(S)$ and $h(S)$ for structured sets $S$, whose structure can be expressed in terms of Minkowski sums and/or lattices. In fact, the upper bound on $f((\integer^d \times \real^n) \cap C)$ will be deduced as a direct consequence of results for such sets $S$ (see Theorems~\ref{part:mink:sum} and \ref{part:lattices}).

We do not systematically address the interesting question of characterizing maximal $S$-free sets. Some information on this question can be found in \cite{arXiv:1110.1014,MR2724071,MR2600658,MR2801234,MR1114315}. 

The manuscript is organized as follows. In Section~\ref{sect:results} we formulate the main results. Section~\ref{sect:background} provides the necessary background material. In Section~\ref{sect:general} we prove results for general sets $S$ and show how to derive the known relations $h(\integer^d) = f(\integer^d) = 2^d$ and $f(\integer^d \cap C) \le 2^d$ (for an arbitary convex $C \subseteq \real^d$) as a consequence. In Section~\ref{sect:structured} we give upper bounds on $f(S)$ and $h(S)$ for structured sets $S$.

\section{Main results} \label{sect:results}

\subsection*{Results on $f(S)$ and $h(S)$ for general sets $S$}

A set $S\subseteq \real^d$ is said to be \term{discrete} if every bounded subset of $S$ is finite. 

\begin{theorem} \label{main:thm} \thmheader{Relation between $f$ and $h$}
	Let $S \subseteq \real^d$. Then the following statements hold:
	\begin{enumerate}[I.]
		\item \label{part:f<=h} $f(S) \le h(S)$.
		\item \label{part:f=h} If $S$ is discrete, one has $f(S)=h(S)$.
	\end{enumerate}
\end{theorem}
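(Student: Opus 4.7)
My plan rests on a single duality: an irredundant description $L=\bigcap_{i=1}^m H_i^+$ of a $d$-dimensional maximal $S$-free polyhedron by closed half-spaces corresponds to a family of $m$ $S$-convex sets whose total intersection is empty but whose $(m-1)$-fold intersections are not. The construction implementing this correspondence will yield Part~\ref{part:f<=h}; producing the dual witnesses from a Helly-saturating family will yield Part~\ref{part:f=h} under the discreteness assumption.

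For Part~\ref{part:f<=h}, let $h:=h(S)$; if $h=+\infty$ there is nothing to prove, so assume $h<+\infty$, and let $L$ be a $d$-dimensional maximal $S$-free set. Assume first that $L$ is polyhedral, $L=\bigcap_{i=1}^m H_i^+$ with the $H_i^+$ facet-defining, so each $L_i:=\bigcap_{j\ne i}H_j^+$ strictly contains $L$. By maximality $L_i$ fails to be $S$-free, yielding $s_i\in S\cap\intr(L_i)$; then $s_i\in\intr(H_j^+)$ for every $j\ne i$, and since $\intr(L)\cap S=\emptyset$ the point $s_i$ cannot lie in $\intr(H_i^+)$ either. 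The $S$-convex sets $A_i:=S\cap\intr(H_i^+)$ therefore satisfy $\bigcap_{j\ne i}A_j\ni s_i$ for each $i$ while $\bigcap_{i=1}^m A_i\subseteq S\cap\intr(L)=\emptyset$, forcing $m\le h$ by the definition of the Helly number. For a non-polyhedral $L$, I would show separately that $h(S)<+\infty$ already precludes non-polyhedrality: a non-polyhedral maximal $S$-free set would admit arbitrarily large families of ``mutually essential'' supporting half-spaces to which the same construction could be applied, contradicting the finiteness of $h(S)$.

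For Part~\ref{part:f=h}, where $S$ is discrete, I must prove the reverse inequality $f(S)\ge h(S)$. Take $S$-convex sets $F_i=S\cap C_i$ ($i\in[m]$, $m=h(S)$) with $\bigcap_i F_i=\emptyset$ and $\bigcap_{j\ne i}F_j\ne\emptyset$, and pick $q_i\in\bigcap_{j\ne i}F_j$. The $q_i$ are then pairwise distinct points of $S$ with $q_i\notin C_i$ and $q_j\in C_i$ for $j\ne i$. Separation between $q_i$ and $C_i$, followed by a small generic perturbation, yields closed half-spaces $H_i^+$ with $C_i\subseteq H_i^+$, $q_i\in\bd(H_i^+)$ and $q_j\in\intr(H_i^+)$ for $j\ne i$, so that the polyhedron $L_0:=\bigcap_{i=1}^m H_i^+$ carries $q_i$ in the relative interior of its $i$-th facet. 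The polyhedron $L_0$ need not be $S$-free, but this is where discreteness intervenes: $\intr(L_0)\cap S$ is discrete, hence locally finite, and a local trimming procedure---adding supporting half-spaces that push stray interior $S$-points onto the boundary on the side opposite to $q_1,\dots,q_m$---produces an $S$-free polyhedron still carrying each $q_i$ in the relative interior of a distinct facet. Enlarging this polyhedron to a maximal $S$-free $L$ by Zorn's lemma then establishes $f(S)\ge m$.

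The principal obstacle lies in this last step of Part~\ref{part:f=h}: one must ensure that neither the trimming step nor the subsequent maximal extension merges two of the $m$ designated facets into one, and this is precisely where the discreteness of $S$ is indispensable, giving each $q_i$ a bounded punctured neighbourhood free of other $S$-points in which trimming half-spaces can be placed without disturbing the other $q_j$'s. A secondary obstacle, in Part~\ref{part:f<=h}, is the reduction to the polyhedral setting: one needs that $h(S)<+\infty$ already implies polyhedrality of every $d$-dimensional maximal $S$-free set, which is what makes the $H_i^+$ of the above construction actually exist in finite number.
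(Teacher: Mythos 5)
Your proposal has genuine gaps in both parts; in each case the polyhedral/finite core is right, but the step that makes the theorem nontrivial is missing or broken.

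\textbf{Part~\ref{part:f<=h}.} For a maximal $S$-free \emph{polyhedron} $L=\bigcap_{i=1}^m H_i^+$ your argument is correct and is essentially the paper's use of the Helly number (the sets $S\cap\intr(H_i^+)$ with empty total intersection). The gap is the non-polyhedral case, and your proposed repair does not work. If $L$ is maximal $S$-free and \emph{not} a polyhedron, then the intersection of any \emph{finite} family of supporting halfspaces of $L$ properly contains $L$, hence by maximality is \emph{not} $S$-free; that is, $\bigcap_i\bigl(S\cap\intr(H_i^+)\bigr)\ne\emptyset$ for every finite subfamily. The very hypothesis of the Helly application --- emptiness of the total intersection --- therefore fails, and no notion of ``mutually essential'' supporting halfspaces can restore it. The paper closes this gap with a compactness argument (Lemma~\ref{key:lemma}): exhaust $\intr(L)$ by an increasing sequence of $S$-free polytopes $P_t$, apply the Helly step to each $P_t$ to enclose it in an $S$-free polyhedron $Q_t$ with at most $k$ facets, normalize the defining inequalities of the $Q_t$ and pass to a convergent subsequence to obtain a single $S$-free polyhedron with at most $k$ facets containing $L$. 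Some limiting argument of this kind is indispensable; it is the actual content of Part~\ref{part:f<=h}.

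\textbf{Part~\ref{part:f=h}.} Your construction picks witnesses $q_i\in\bigcap_{j\ne i}F_j$, separates, and hopes to trim $L_0=\bigcap_i H_i^+$ into an $S$-free polyhedron keeping each $q_i$ on a distinct facet. This fails before discreteness even enters: nothing in your setup prevents a point $s\in S$ from lying in $\relintr\bigl(\conv(\{q_1,\ldots,q_m\})\bigr)$, since the hypotheses only give $q_j\in C_i$ for $j\ne i$, so the convex hull of \emph{all} the $q_i$ need not avoid $S$. Any closed halfspace whose interior contains every $q_i$ contains $\conv(\{q_1,\ldots,q_m\})$, hence $s$, in its interior; such an $s$, if it lies in $\intr(L_0)$, can never be cut off by a trimming halfspace that preserves the $m$ designated facets. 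The paper avoids this by not choosing the witnesses freely: it first reduces to finite $S$ via $h(S)=\lim_t h(S\cap[-t,t]^d)$, and for finite $S$ (Lemma~\ref{S:free:sets:for:finite:S:lem}) it modifies the inequality system itself --- deleting redundant constraints and raising right-hand sides until every remaining constraint is ``blocked'' by a point of $S$ --- which directly produces a maximal $S$-free polyhedron whose blocking points $x_i$ satisfy $S\cap\conv(\{x_1,\ldots,x_k\})=\{x_1,\ldots,x_k\}$. Discreteness is then used at a later stage, to fatten the truncating box $[-t,t]^d$ to $[-t-\eps,t+\eps]^d$ without acquiring new points of $S$, so that the finite-$S$ polyhedron becomes genuinely $S$-free and its $k$ facets survive the extension to a maximal $S$-free set. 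Your instinct about where discreteness helps is partly right, but the construction of the initial $S$-free polyhedron with $m$ prescribed facets --- the hardest step --- is missing.
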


The following continuity-type result can be used to bound $f(S)$ or $h(S)$ for a `complicated set' $S$ by approximating $S$ with `simple sets' $S_t$ ($t \in \natur$). 

\begin{theorem} \thmheader{$\liminf$ theorem} \label{liminf:thm} Let $S \subseteq \real^d$. Let $(S_t)_{t=1}^{+\infty}$ be a sequence of sets satisfying $S_t \subseteq S_{t+1} \subseteq \real^d \ \forall t \in \natur$ and $S = \bigcup_{t=1}^{+\infty} S_t$. Then
\begin{align}
	h(S) & \le \liminf_{t \rightarrow +\infty} h(S_t), \label{h:liminf:ineq} \\
	f(S) & \le \liminf_{t \rightarrow +\infty} f(S_t). \label{f:liminf:ineq}
\end{align}
\end{theorem}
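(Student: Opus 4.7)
For the Helly-number inequality $h(S) \le \liminf_t h(S_t)$, I would combine a pigeonhole argument with the monotonicity of the approximating sequence. Set $k := \liminf_{t \to +\infty} h(S_t)$; we may assume $k < +\infty$, so $h(S_t) \le k$ holds for infinitely many indices $t$. Take any finite family of $S$-convex sets $A_i = S \cap C_i$ ($i \in [m]$) with $\bigcap_{i \in [m]} A_i = \emptyset$, and set $A_i^{(t)} := S_t \cap C_i$; these are $S_t$-convex and satisfy $A_i^{(t)} \subseteq A_i^{(t+1)}$ with $\bigcup_t A_i^{(t)} = A_i$. Since $\bigcap_{i \in [m]} A_i^{(t)} \subseteq \bigcap_{i \in [m]} A_i = \emptyset$, the bound $h(S_t) \le k$ yields a $k$-tuple of indices in $[m]$ whose $S_t$-intersection is empty. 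As there are only finitely many $k$-tuples in $[m]$, a single $k$-tuple $(i_1,\ldots,i_k)$ works for infinitely many $t$. It remains to observe: if some $x$ lay in $\bigcap_{j} A_{i_j}$, then $x \in S_{t_0}$ for some $t_0$ and hence $x \in \bigcap_{j} A_{i_j}^{(t)}$ for every $t \ge t_0$, contradicting the monotonicity conclusion above. Thus $h(S) \le k$.

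For the inequality $f(S) \le \liminf_t f(S_t)$, let $k := \liminf_t f(S_t)$; the interesting case is $k < +\infty$ and $f(S) \ne -\infty$. Pick any $d$-dimensional maximal $S$-free set $L$. Since $S_t \subseteq S$, the set $L$ is $S_t$-free, and I would invoke the standard extension result (to be recorded in Section~\ref{sect:background}) to obtain, for each $t$, a $d$-dimensional maximal $S_t$-free set $L_t \supseteq L$. For infinitely many $t$ the set $L_t$ is a polyhedron with at most $k$ facets, so write $L_t = \bigcap_{i=1}^{k} \bigl\{x \in \real^d : \sprod{a_i^{(t)}}{x} \le b_i^{(t)}\bigr\}$, with $\|a_i^{(t)}\| = 1$ and with the convention $b_i^{(t)} = +\infty$ for the "padding" half-spaces equal to $\real^d$.

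The key compactness step: fix $x_0 \in \intr L$; since $L \subseteq L_t$, each $b_i^{(t)}$ is bounded below by $-\|x_0\|$. Passing to a common subsequence (using compactness of the unit sphere and of $[-\|x_0\|, +\infty]$), I may assume $a_i^{(t)} \to a_i$ and $b_i^{(t)} \to b_i$ for every $i \in [k]$. Let $L^\ast$ be the intersection of the $k$ half-spaces $\bigl\{x : \sprod{a_i}{x} \le b_i\bigr\}$ (dropping those with $b_i = +\infty$). Then $L^\ast$ is a polyhedron with at most $k$ facets and $L \subseteq L^\ast$. The critical point is to show $L^\ast$ is $S$-free: if $y \in \intr L^\ast \cap S$, then $\sprod{a_i}{y} < b_i$ for all non-trivial $i$, so by continuity $\sprod{a_i^{(t)}}{y} < b_i^{(t)}$ for $t$ large, i.e.\ $y \in \intr L_t$; on the other hand $y \in S_{t_0}$ for some $t_0$, hence $y \in S_t$ for all $t \ge t_0$, contradicting that $L_t$ is $S_t$-free. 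By maximality of $L$ we conclude $L = L^\ast$, so $L$ is a polyhedron with at most $k$ facets, proving $f(S) \le k$.

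The main obstacle is the second inequality. The Helly part is essentially a bookkeeping exercise once the monotonicity is spelled out, but for $f$ one must combine two non-trivial ingredients: a general extension statement guaranteeing that any $d$-dimensional $S_t$-free set sits inside a $d$-dimensional maximal $S_t$-free set, and a compactness/limit argument for the sequence of approximating polyhedra $L_t$, which must be shown to produce an $S$-free limit (not just a limit of $S_t$-free sets). The delicate point in the limit is to exploit the monotonicity $S_t \subseteq S_{t+1}$ so that a point of $S$ in $\intr L^\ast$ actually lies in $S_t$ simultaneously with lying in $\intr L_t$ for some common $t$.
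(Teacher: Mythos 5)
Your proof is correct and follows essentially the same route as the paper: the identical pigeonhole-plus-monotonicity argument for $h$, and for $f$ the same scheme of extending to maximal $S_t$-free polyhedra with at most $k$ facets, passing to a convergent subsequence of their facet descriptions, and verifying $S$-freeness of the limit polyhedron by combining the monotonicity $S_t \subseteq S_{t+1}$ with the strict inequalities on $\intr$ of the limit. The only cosmetic deviations are your normalization (unit normals with a compactified right-hand side, versus the paper's unit vectors $(u_{i,t},\beta_{i,t})$ in $\real^{d+1}$) and your brief treatment of the degenerate cases $k \in \{-\infty,0\}$, which the paper dispatches explicitly in one line.
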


\subsection*{Upper bounds on $f(S)$ and $h(S)$ for structured sets $S$}

For $A, B \subseteq \real^d$ the \emph{Minkowski sum} $A+B$ and \emph{Minkowski difference} $A-B$ of $A$ and $B$ are defined by $A \pm B := \setcond{a \pm b}{a \in A, \ b \in B}$.

\begin{theorem} \label{part:mink:sum} \thmheader{On adding a convex set}
	Let $S \subseteq \real^d$ be closed. Let $C \subseteq \real^d$ be nonempty and convex. Then
		\begin{align}
			h(C+S)  & \le  (\dim(C)+1) h(S), \label{h:mink:sum:ineq} \\
			f(C+S)  & \le  f(S). \label{f:mink:sum:ineq}
		\end{align}
\end{theorem}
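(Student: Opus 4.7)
The plan is to prove the two inequalities separately; each reduces to a suitable application of Helly's theorem.

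For~\eqref{h:mink:sum:ineq}, I would start with a finite family $F_i := (C+S) \cap K_i$ ($i \in [m]$) of $(C+S)$-convex sets with $\bigcap_i F_i = \emptyset$ and, setting $K := \bigcap_i K_i$, rewrite the hypothesis as $S \cap (K - C) = \emptyset$; since $K - C = \bigcup_{c \in C}(K-c)$, this is further equivalent to $C \cap \bigcap_i(K_i - s) = \emptyset$ for every $s \in S$. For each fixed $s$, classical Helly applied inside $\aff(C)$ (a space of dimension $\dim(C)$) to the convex sets $C$ and $(K_i - s) \cap \aff(C)$, $i \in [m]$, produces $J(s) \subseteq [m]$ with $|J(s)| \le \dim(C)+1$ and $C \cap (K_{J(s)} - s) = \emptyset$, equivalently $s \notin K_{J(s)} - C$. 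The key step is to package these per-$s$ witnesses into the \emph{finite} family of $S$-convex sets
\[
    \cH := \bigsetcond{S \cap (K_J - C)}{J \subseteq [m],\ |J| = \dim(C)+1},
\]
whose intersection is empty because every $s \in S$ is excluded by the member indexed by $J(s)$ (after padding to size $\dim(C)+1$, if necessary). By the definition of $h(S)$ there is a subfamily $\cH' \subseteq \cH$ of size $\le h(S)$ that already has empty intersection, and letting $I$ be the union of the $\le h(S)$ index sets $J$ labelling the members of $\cH'$ gives $|I| \le (\dim(C)+1)\,h(S)$ and $S \cap (K_I - C) \subseteq \bigcap\cH' = \emptyset$, which is precisely $\bigcap_{i \in I} F_i = \emptyset$.

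For~\eqref{f:mink:sum:ineq}, I would take a $d$-dimensional maximal $(C+S)$-free set $L$ and, after translation so that $o \in C$, form the closed convex set $M := \cl(\intr(L) - C)$. Since $\intr(L) - C$ is open, convex, and disjoint from $S$ (this disjointness being equivalent to the $(C+S)$-freeness of $L$), the set $M$ is $S$-free, so it is contained in a maximal $S$-free polyhedron $M^* = \bigcap_{j=1}^{k} H_j$ with $k \le f(S)$. The central claim is the identity $L = L^* := \bigcap_{c \in C}(M^* + c)$: the inclusion $L \subseteq L^*$ follows from $L - C \subseteq \cl(\intr(L) - C) = M \subseteq M^*$, while $L^*$ is itself $(C+S)$-free (any $x \in \intr(L^*)$ satisfies $x - C \subseteq \intr(M^*)$, which is disjoint from $S$), so maximality of $L$ forces $L^* \subseteq L$. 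Commuting the two intersections then yields $L = \bigcap_{j=1}^{k} \bigcap_{c \in C}(H_j + c)$, and each inner intersection $\bigcap_{c \in C}(H_j + c)$ is either empty or again a closed half-space; hence $L$ is a polyhedron with at most $k \le f(S)$ facets.

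The main obstacle is the Helly bound~\eqref{h:mink:sum:ineq}: the per-$s$ witnesses $J(s)$ supplied by classical Helly in $\aff(C)$ are individually of the right size $\dim(C)+1$, but they must be amalgamated over the possibly infinite parameter $s \in S$ into a single uniform witness. The device that resolves this is essentially a \emph{double Helly}: one application of classical Helly (with parameter $\dim(C)+1$) per index $s$, followed by one application of the $S$-Helly number $h(S)$ to the auxiliary finite family $\cH$ whose empty intersection is tautological. By comparison,~\eqref{f:mink:sum:ineq} is a more routine transport argument, once the identity $L = \bigcap_{c \in C}(M^* + c)$ has been recognised.
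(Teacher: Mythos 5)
Both halves of your proposal are correct. For \eqref{f:mink:sum:ineq} you follow essentially the same route as the paper: it also passes to the $S$-free set $\cl(L-C)$ (which equals your $M$, via the identity $\intr(L-C)=\intr(L)-C$), encloses it in an $S$-free polyhedron $P=\bigcap_{j}H_j$ with at most $f(S)$ facets, and then shifts each halfspace $H_j=\setcond{x}{\sprod{u_j}{x}\le\beta_j}$ to $\setcond{x}{\sprod{u_j}{x}\le\beta_j+\inf_{c\in C}\sprod{u_j}{c}}$ --- which is exactly your inner intersection $\bigcap_{c\in C}(H_j+c)$; your packaging via $L=\bigcap_{c\in C}(M^*+c)$ and maximality is an equivalent rephrasing. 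For \eqref{h:mink:sum:ineq}, however, you take a genuinely different route: the paper writes $C=\psi(C')$ for an injective affine $\psi$, observes $S+C=\phi(S\times C')$ for the affine map $\phi\sizetwocol{x}{y}=x+\psi(y)$, and then chains the monotonicity properties \eqref{intersect:conv:ineq} and \eqref{image:ineq} with the external bound $h(S\times\real^n)\le(n+1)h(S)$ imported from the Averkov--Weismantel paper, whereas your ``double Helly'' argument --- classical Helly inside $\aff(C)$ applied per point $s\in S$ to produce witnesses $J(s)$ of size at most $\dim(C)+1$, followed by one application of $h(S)$ to the finite auxiliary family $\cH=\bigsetcond{S\cap(K_J-C)}{|J|=\dim(C)+1}$ --- is direct and self-contained, at the price of redoing in-line the combinatorial work that the cited result encapsulates. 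Two trivial loose ends are worth tidying: when $m\le\dim(C)$ the family $\cH$ as literally defined is empty, so one should take $I=[m]$ directly (and dispose separately of the degenerate values $h(S)\in\{0,+\infty\}$ and $f(S)\in\{-\infty,+\infty\}$); and in the per-$s$ Helly step the witnessing $(\dim(C)+1)$-element subfamily need not contain $C$ itself, but then $\bigl(\bigcap_{i\in J}(K_i-s)\bigr)\cap\aff(C)=\emptyset$ still forces $C\cap(K_J-s)=\emptyset$, so nothing breaks.
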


A set $\Lambda \subseteq \real^d$ is said to be a \term{lattice} if $\Lambda$ is a discrete subgroup of the additive group $\real^d$. Every lattice $\Lambda \subseteq \real^d$, $\Lambda \ne \{o\}$ can be given by 
\[
	\Lambda = \setcond{t_1 x_1 +  \cdots + t_r x_r}{t_1,\ldots,t_r \in \integer},
\]
where $r \in \{1,\ldots,d\}$ and $x_1,\ldots,x_r \in \real^d$ are linearly independent vectors. The value $r$ above is called the rank of the lattice $\Lambda$ and is denoted by $\rank(\Lambda)$. For $\Lambda=\{o\}$ we define $\rank(\Lambda)=0$.

\begin{theorem} \label{part:lattices} Let $C,D \subseteq \real^d$ be nonempty convex sets and let $\Lambda \subseteq \real^d$ be a lattice. Then
	\begin{align}
		h( (C+ \Lambda) \cap D) & \le (\dim(C)+1) 2^{\rank(\Lambda)}, \label{h:mink:sum:lat:ineq} \\
		f( (C+ \Lambda) \cap D) & \le 2^{\rank(\Lambda)}. \label{f:mink:sum:lat:ineq}
	\end{align}
\end{theorem}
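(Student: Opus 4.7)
The plan has three steps: (i) establish the pure-lattice equalities $h(\Lambda) = f(\Lambda) = 2^{\rank(\Lambda)}$; (ii) apply Theorem~\ref{part:mink:sum} to introduce $C$; and (iii) intersect with $D$ via monotonicity. The main obstacle is the $f$-side of step (iii).

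For step (i), I use the Doignon--Lov\'asz equalities $h(\integer^r) = f(\integer^r) = 2^r$, which are established in Section~\ref{sect:general} as consequences of Theorem~\ref{main:thm}. For a rank-$r$ lattice $\Lambda \subseteq \real^d$, fixing a basis yields a linear isomorphism $\aff(\Lambda) \to \real^r$ sending $\Lambda$ onto $\integer^r$. Every $\Lambda$-convex set lies in $\aff(\Lambda)$, so this isomorphism identifies the families of $\Lambda$-convex and $\integer^r$-convex sets, giving $h(\Lambda) = 2^r$. A $d$-dimensional maximal $\Lambda$-free set in $\real^d$ is, by a maximality argument in any complement of $\aff(\Lambda)$, the Minkowski sum of an $r$-dimensional maximal $\integer^r$-free set in $\aff(\Lambda)$ with a fixed complementary linear subspace, and this operation preserves the facet count; hence $f(\Lambda) = 2^r$.

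For step (ii), Theorem~\ref{part:mink:sum} applied with $S = \Lambda$ (which is closed) and the given nonempty convex $C$ immediately yields $h(C+\Lambda) \le (\dim(C)+1)\cdot 2^{\rank(\Lambda)}$ and $f(C+\Lambda) \le 2^{\rank(\Lambda)}$.

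For step (iii), put $T := (C+\Lambda) \cap D$. The $h$-side is easy: for any convex $K$ one has $T \cap K = (C+\Lambda) \cap (D \cap K)$ with $D \cap K$ convex, so the family of $T$-convex sets is a subfamily of the family of $(C+\Lambda)$-convex sets; Helly numbers are monotone under such inclusions, whence $h(T) \le h(C+\Lambda)$, which proves \eqref{h:mink:sum:lat:ineq}. The $f$-side is the main obstacle. The clean reduction I aim for is $f(T) \le f(C+\Lambda)$, i.e., monotonicity of $f$ under intersection with a convex set. The difficulty is that a $d$-dimensional maximal $T$-free polyhedron $L$ generally fails to be $(C+\Lambda)$-free, because $\intr L$ may meet $(C+\Lambda) \setminus D$; so $L$ is not directly comparable with a maximal $(C+\Lambda)$-free set. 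My intended route is a facet-blocking argument: every facet of $L$ contains a point of $T \subseteq C+\Lambda$ in its relative interior, and I would combine this blocking data with a deformation that replaces $L$ by a maximal $(C+\Lambda)$-free polyhedron sharing the same facet-defining hyperplanes, thereby bounding the facet count of $L$ by $f(C+\Lambda) \le 2^{\rank(\Lambda)}$. As a fallback, one can exploit that $T$-freeness depends only on $\cl T$ and approximate $\cl T$ from below by an increasing union of discrete sets, invoking Theorem~\ref{liminf:thm} and part~\ref{part:f=h} of Theorem~\ref{main:thm}; the delicate point is to ensure honest exhaustion rather than mere density.
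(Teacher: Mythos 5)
Your $h$-inequality is correct and coincides with the paper's argument (monotonicity under intersection with the convex set $D$, then Theorem~\ref{part:mink:sum}, then $h(\Lambda)=2^{\rank(\Lambda)}$ from Doignon). The $f$-inequality, however, is exactly where your proposal has a genuine gap, and you have correctly identified the location of the gap without closing it. The reduction $f((C+\Lambda)\cap D)\le f(C+\Lambda)$ --- monotonicity of $f$ under intersecting $S$ with a convex set --- is not a lemma available to you; it is essentially the whole difficulty (already for $\Lambda=\integer^d$, $C=\{o\}$ it is the Dey--Mor\'an theorem). Your primary route rests on the claim that every facet of a $d$-dimensional maximal $T$-free polyhedron $L$ contains a point of $T$ in its relative interior. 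This is false in general: Section~\ref{sect:parity:arg} of the paper gives the explicit counterexample $S=\integer^2\cap C$ with $C=\setcond{x\in\real^2}{\sprod{x}{u}\le 1}$, $u=\sizetwocol{2}{\sqrt2}$, where $L=\setcond{x\in\real^2}{\sprod{x}{u}\ge 1}$ is maximal $S$-free but its single facet contains no point of $S$. (It is also not known a priori that a maximal $T$-free set is a polyhedron when $T$ is not a finite union of polyhedra.) So the ``facet-blocking plus deformation'' step cannot be carried out as described.

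The fallback via discrete approximation also fails, for two reasons. First, an increasing union of discrete sets is countable, so it can never equal $\cl(T)$ when the pieces $(z+C)\cap D$ have positive dimension; Theorem~\ref{liminf:thm} requires honest exhaustion, and mere density is not enough (freeness with respect to a dense subset of $\cl(T)$ does coincide with $T$-freeness, but you would still have to prove the analogue of Lemma~\ref{S:between:lem} for your approximants, which that lemma only provides when the lower approximation contains the \emph{relative interiors} of the convex pieces, not just a dense countable subset). Second, and more fatally, even granting exhaustion you would need $f(S_t)=h(S_t)\le 2^{\rank(\Lambda)}$ for the discrete approximants, but $h$ is not monotone under passing to arbitrary subsets: already $n$ points in convex position inside a single translate $z+C$ of a two-dimensional $C$ form a discrete $S_t\subseteq(C+\Lambda)\cap D$ with $h(S_t)=n$. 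The paper avoids both problems by approximating differently: in Case~1 it treats $C,D$ \emph{polytopes}, where Lemma~\ref{polyhedral:S:lem} (separation from finitely many polyhedra) genuinely forces $L$ to be a polyhedron with every facet blocked, and the parity argument is applied to the lattice components $x_i$ of the blocking points $x_i+y_i$; in Case~2 it exhausts $C$ and $D$ from inside by increasing polytopes, producing approximants $S_t=(\Lambda+C_t)\cap D_t$ that are finite unions of polytopes (not discrete sets), and then combines Lemma~\ref{S:between:lem} with Theorem~\ref{liminf:thm}. You would need to replace your step~(iii) by this (or an equivalent) two-stage argument.
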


\begin{theorem} \label{mixed:integer:f:thm}
	Let $d \in \natur,n \in \natur_0$ and let $C \subseteq \real^{d+n}$ be convex. Then 
	\begin{equation} \label{mixed:integer:f:ineq}
		f((\integer^d \times \real^n) \cap C) \le 2^d.
	\end{equation}
	Furthermore, \eqref{mixed:integer:f:ineq} is attained with equality for $C=\real^{d+n}$.
\end{theorem}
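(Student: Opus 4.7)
The plan is to obtain the upper bound as an immediate application of Theorem~\ref{part:lattices}, and to establish tightness by a cylinder construction over a Lov\'asz-type maximal lattice-free polytope. For the upper bound, I would write $\integer^d \times \real^n = C_0 + \Lambda$ in $\real^{d+n}$, where $C_0 := \{o\} \times \real^n$ is convex and $\Lambda := \integer^d \times \{o\}$ is a lattice of rank $d$. Then $(\integer^d \times \real^n) \cap C = (C_0 + \Lambda) \cap C$, and applying inequality~\eqref{f:mink:sum:lat:ineq} with $D := C$ gives $f((\integer^d \times \real^n) \cap C) \le 2^{\rank(\Lambda)} = 2^d$.

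For tightness at $C = \real^{d+n}$, I would invoke the Lov\'asz equality $f(\integer^d) = 2^d$ (which the paper rederives earlier) to pick a maximal $\integer^d$-free polytope $L_0 \subseteq \real^d$ with exactly $2^d$ facets. Setting $L := L_0 \times \real^n$ yields a $(d+n)$-dimensional polyhedron whose $2^d$ facets are exactly $F \times \real^n$ for $F$ a facet of $L_0$. Its interior $\intr(L_0) \times \real^n$ is disjoint from $\integer^d \times \real^n$, so $L$ is $(\integer^d \times \real^n)$-free, and the task reduces to showing that $L$ is \emph{maximal}.

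For maximality, I would take any $(x^*, y^*) \notin L$ (equivalently $x^* \notin L_0$) and establish the identity
\[
\intr\bigl(\conv(L \cup \{(x^*, y^*)\})\bigr) = \intr\bigl(\conv(L_0 \cup \{x^*\})\bigr) \times \real^n.
\]
The inclusion ``$\subseteq$'' is immediate from $\conv(L \cup \{(x^*, y^*)\}) \subseteq \conv(L_0 \cup \{x^*\}) \times \real^n$. For ``$\supseteq$'', any $x \in \intr(\conv(L_0 \cup \{x^*\}))$ admits a representation $x = (1-t)x_0 + t x^*$ with $x_0 \in L_0$ and $t \in [0, 1)$, whence for every $y \in \real^n$ the choice $y_0 := (y - t y^*)/(1-t)$ realizes $(x, y) = (1-t)(x_0, y_0) + t(x^*, y^*) \in \conv(L \cup \{(x^*, y^*)\})$. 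Once this identity is in hand, the maximality of $L_0$ as $\integer^d$-free supplies some $z \in \integer^d \cap \intr(\conv(L_0 \cup \{x^*\}))$, and then $(z, o) \in \integer^d \times \real^n$ sits in the interior of $\conv(L \cup \{(x^*, y^*)\})$, confirming maximality. I expect this interior identity to be the main obstacle: $\conv(L \cup \{(x^*, y^*)\})$ is not itself a product set (the apex $(x^*, y^*)$ is reached only with one $y$-value), yet its interior turns out to be one.
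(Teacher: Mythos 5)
Your proposal is correct and takes essentially the same route as the paper: the upper bound is obtained exactly as in the paper, by applying \eqref{f:mink:sum:lat:ineq} to the decomposition $\integer^d\times\real^n=(\{o\}\times\real^n)+(\integer^d\times\{o\})$ with $D=C$. For the equality case the paper simply cites the identity $f(S\times\real^n)=f(S)$ from \eqref{f:prod:lin:space:eq} together with $f(\integer^d)=2^d$, and your cylinder construction is a sound hands-on proof of precisely the direction of that identity that is needed (the only quibble being that $f(\integer^d)=2^d$ guarantees a maximal $\integer^d$-free \emph{polyhedron} with $2^d$ facets, not necessarily a polytope --- but your argument never uses boundedness).
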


If $C$ in \eqref{f:mink:sum:lat:ineq} is a linear space which does not contain nonzero vectors of $\Lambda$, inequalities \eqref{f:mink:sum:lat:ineq} and \eqref{mixed:integer:f:ineq} are  equivalent. In this special case, \eqref{f:mink:sum:lat:ineq} is a  `coordinate-free' version of \eqref{mixed:integer:f:ineq}.

An analog of Theorem~\ref{mixed:integer:f:thm} for the functional $h$ is provided by \cite{arXiv:1002.0948}. By the main result of \cite{arXiv:1002.0948},
\begin{equation} \label{mixed-int:helly:ineq}
	h((\integer^d \times \real^n) \cap C) \le (n+1)2^d
\end{equation}
 for every convex set $C \subseteq \real^{d+n}$, with equality attained for $C=\real^{d+n}$.

We emphasize that the assertions for $f$ are the main parts of  Theorems~\ref{liminf:thm}--\ref{part:lattices}, while the assertions for $h$ are given as  a complement, in order to highlight the analogy between $f$ and $h$. Inequality \eqref{h:liminf:ineq}  will be derived as a consequence of basic properties of $h$, while \eqref{h:mink:sum:ineq} and \eqref{h:mink:sum:lat:ineq} will be shown to follow directly from the results of \cite{arXiv:1002.0948}. The assertions for $f$ in Theorems~\ref{main:thm} and \ref{liminf:thm} are proved using a compactness argument. The proof of \eqref{f:mink:sum:ineq} relies on basic convex geometry. The proof of \eqref{f:mink:sum:lat:ineq} and \eqref{mixed:integer:f:ineq} involves the so-called parity argument, which is presented in the following section.

\section{Background material} \label{sect:background}

Throughout the manuscript we use the following notation. The cardinality of a set $X$ is denoted by $|X|$. The standard scalar product of $\real^d$ is denoted by $\sprod{\dotvar}{\dotvar}$. By $\aff, \bd, \cl, \conv, \intr$ and $\relintr$ we denote the affine hull, boundary, closure, convex hull, interior (of a set) and relative interior (of a convex set), respectively.

\subsection*{The role of the parity argument} \label{sect:parity:arg}

Let $d \in \natur$. The following implication is usually referred to as the \emph{parity argument}: if $X \subseteq \integer^d$ and $|X|>2^d$, then there exist $x, y \in X$ with $x \ne y$ and $\frac{1}{2}(x+y) \in \integer^d$. This implication is obtained by comparing the elements of $X$ modulo $2 \integer^d$. Note that the parity argument is the key step in the existing proofs of $f(\integer^d) = 2^d$ and $h(\integer^d)=2^d$.

We illustrate how to use the parity argument by sketching a proof of the inequality $f(\integer^d \cap C) \le 2^d$ in the case that $C$ is a \emph{bounded}, convex subset of $\real^d$. Let $S:=\integer^d \cap C$. If $S = \emptyset$, there is nothing to prove. Thus, we assume $S \ne \emptyset$. Let $L$ be an arbitrary $d$-dimensional maximal $S$-free set. Using separation theorems for $L$ and each of the (finitely many) points of $S$, we see that $L$ is a polyhedron. Let $m$ be the number of facets of $L$. Taking into account the finiteness of the set $S$ and using basic facts from the theory of polyhedra, one can show that the relative interior of every facet of $L$ contains a point of $S$ (this is proved rigorously in Section~\ref{sect:general}, see Lemma~\ref{S:free:sets:for:finite:S:lem}.II). Consider a sequence of points $x_1,\ldots,x_m \in S$ constructed by picking one point of $S$ from the relative interior of each facet of $L$. If $m > 2^d$, then the parity argument yields the existence of indices $i,j \in [m]$ with $i \ne j$ and $\frac{1}{2}(x_i+x_j) \in \integer^d$. Clearly, $\frac{1}{2}(x_i+x_j) \in \intr(L) \cap S$, which is a contradiction to the fact that $L$ is $S$-free. Thus, $m \le 2^d$. 

It should be mentioned that the above proof idea cannot be directly extended to the case of an arbitrary convex set $C$ by the following reason. For certain choices of $C$ there exist $d$-dimensional maximal $S$-free polyhedra $L$ such that the relative interior of some facets of $L$ does not contain points of $S$. Take, for example, $C=\setcond{x \in \real^2}{\sprod{x}{u} \le 1}$ and $L=\setcond{x \in \real^2}{\sprod{x}{u} \ge 1}$ with $u := \sizetwocol{2}{\sqrt{2}}$. Thus, in some cases using the parity argument one can give a relatively simple proof of $f( \integer^d \cap C) \le 2^d$, but for a proof in the case of a general convex set $C$ additional work is needed. Our proofs of the inequality $f( \integer^d \cap C) \le 2^d$ and its generalizations \eqref{f:mink:sum:lat:ineq} and \eqref{mixed:integer:f:ineq} will combine the parity argument with elementary tools from analysis (such as limits and the compactness argument).

\subsection*{Basic facts about $f$, $h$ and maximal $S$-free sets} \label{sect:prelim}

Let us list some simple properties of the functional $h$. The definition of the Helly number yields the equality

\begin{align}
	h(\cF) & = h(\setcond{F_1 \cap \cdots \cap F_t}{t \in \natur, \ F_1,\ldots,F_t \in \cF}) \label{helly:intersect:eq}
\end{align}
for every nonempty family $\cF$. 

In the following proposition we present several relations for $h(S)$, where $S \subseteq \real^d$.

\begin{proposition} \label{basic:h:prp}
	Let $S \subseteq \real^d$, let $C \subseteq \real^d$ be convex and let $\phi$ be an affine mapping on $\real^d$. Then
	\begin{align}
		h(S) & = \lim_{t \rightarrow +\infty} h(S \cap [-t,t]^d), \label{h:lim:ineq} \\
		h(S) & \le |S|, \label{h:card:bound:ineq} \\
		h(S \cap C) & \le h(S), \label{intersect:conv:ineq} \\
		h(\phi(S)) & \le h(S). \label{image:ineq} 
	\end{align}
\end{proposition}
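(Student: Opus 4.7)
All four estimates follow directly from the definition of $h$. I would prove them in the order \eqref{intersect:conv:ineq}, \eqref{image:ineq}, \eqref{h:card:bound:ineq}, \eqref{h:lim:ineq}, so that \eqref{intersect:conv:ineq} is available when proving the last identity.

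For \eqref{intersect:conv:ineq}, every $(S \cap C)$-convex set has the form $(S \cap C) \cap C' = S \cap (C \cap C')$ with $C \cap C'$ convex, hence is already $S$-convex; the family defining $h(S \cap C)$ is thus a subfamily of the one defining $h(S)$, and the inequality is immediate. For \eqref{image:ineq}, given $\phi(S)$-convex sets $F_i = \phi(S) \cap C_i$ ($i \in [m]$), I would pass to the $S$-convex sets $G_i := S \cap \phi^{-1}(C_i)$, using that the preimage of a convex set under an affine map is convex. Any $x \in \bigcap_{i \in I} G_i$ would satisfy $\phi(x) \in \bigcap_{i \in I} F_i$, and conversely any $y \in \bigcap_{i \in I} F_i$ can be written as $y=\phi(x)$ with $x \in S$ and then $x \in \bigcap_{i \in I} G_i$, so emptiness transfers in both directions between corresponding sub-collections; applying the Helly property of $S$ at level $h(S)$ to the $G_i$ and transferring back yields \eqref{image:ineq}. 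For \eqref{h:card:bound:ineq}, only the case $|S| < +\infty$ is nontrivial: if $F_1, \ldots, F_m$ are $S$-convex with empty intersection, then for each $s \in S$ one can pick $i(s) \in [m]$ with $s \notin F_{i(s)}$, and the sub-family $\bigsetcond{F_{i(s)}}{s \in S}$ has empty intersection and cardinality at most $|S|$.

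For \eqref{h:lim:ineq}, applying \eqref{intersect:conv:ineq} with $C = [-t,t]^d$ gives $h(S \cap [-t,t]^d) \le h(S)$ for every $t$ and hence $\limsup_{t \to \infty} h(S \cap [-t,t]^d) \le h(S)$. For the opposite inequality I would fix an arbitrary $k < h(S)$ and, using $h(S) > k$, choose a finite collection $F_1, \ldots, F_m$ of $S$-convex sets with $\bigcap_{i=1}^m F_i = \emptyset$ but $\bigcap_{i \in I} F_i \ne \emptyset$ for every $k$-element $I \subseteq [m]$. Picking a witness $p_I$ in each such $k$-wise intersection produces finitely many points, all contained in some cube $[-t_0, t_0]^d$; for $t \ge t_0$ the sets $F_i \cap [-t,t]^d$ are $(S \cap [-t,t]^d)$-convex, retain empty total intersection, and still contain $p_I$ in the $I$-th $k$-wise intersection, giving $h(S \cap [-t,t]^d) > k$. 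Letting $k \uparrow h(S)$ yields $\liminf_{t \to \infty} h(S \cap [-t,t]^d) \ge h(S)$. The only mild obstacle is phrasing this last step as \emph{every $k < h(S)$ is exceeded eventually}, which covers the finite case and the case $h(S) = +\infty$ uniformly.
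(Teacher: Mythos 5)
Your proofs of \eqref{intersect:conv:ineq}, \eqref{image:ineq} and \eqref{h:card:bound:ineq} coincide with the paper's: the subfamily observation, the passage to preimages $\phi^{-1}(C_i)$, and the choice of one violated set per point of a finite $S$ are exactly the arguments given there, and they are correct. The only genuine divergence is in \eqref{h:lim:ineq}, and there both arguments work but the mechanisms differ. The paper first notes the sequence $h(S\cap[-t,t]^d)$ is nondecreasing (so the limit exists and is $\le h(S)$ by \eqref{intersect:conv:ineq}), and proves the hard direction $h(S)\le\lim_t h(S\cap[-t,t]^d)$ directly: it fixes convex sets $C_1,\dots,C_m$ with $C_1\cap\cdots\cap C_m\cap S=\emptyset$, extracts for each large $t$ a $k$-tuple of indices witnessing emptiness in $S\cap[-t,t]^d$, and uses the pigeonhole principle on the finitely many index tuples to find one tuple valid for infinitely many $t$, whence for $S$ itself. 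You instead argue contrapositively: from $h(S)>k$ you extract a critical configuration whose $\binom{m}{k}$ nonempty $k$-wise intersections each carry a witness point, observe that these finitely many points lie in a bounded cube, and conclude $h(S\cap[-t,t]^d)>k$ for all large $t$. Both are elementary finiteness arguments; yours localizes the finiteness in the witness points rather than in the index tuples, handles the cases $h(S)<+\infty$ and $h(S)=+\infty$ uniformly via ``every $k<h(S)$ is eventually exceeded,'' and does not need to invoke monotonicity of the sequence, since $\limsup\le h(S)\le\liminf$ already forces convergence. The one point worth making explicit in your write-up is that the failure of condition \eqref{helly:cond} at level $k$ (allowing repeated indices) is equivalent to the existence of a configuration in which every $k$-element subset has nonempty intersection; this is immediate since intersections only grow when indices are dropped, but it is the step that licenses your choice of the points $p_I$.
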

\begin{proof}

	Let us prove \eqref{h:lim:ineq}. The sequence of values $h(S \cap [-t,t]^d)$ with $t \in \natur$ is monotonically nondecreasing and thus convergent, with the limit belonging to $\natur_0 \cup \{+\infty\}$. If the limit of this sequence is $+\infty$, one can see that $h(S)=+\infty$. Otherwise there exists $k \in \natur_0$ such that $h(S \cap [-t,t]^d) = k$ for all sufficiently large $t \in \natur$. Consider arbitrary convex sets $C_1,\ldots,C_m \subseteq \real^d$ with $m \in \natur$ and $C_1 \cap \cdots \cap C_m \cap S = \emptyset$. By the choice of $k$, for every sufficiently large $t \in \natur$ one can choose $i_1(t),\ldots,i_k(t) \in [m]$ such that $C_{i_1(t)} \cap \cdots \cap C_{i_k(t)} \cap S \cap [-t,t]^d = \emptyset$. By the finiteness of $[m]$ there exist indices $i_1,\ldots,i_k \in [m]$ and an infinite set $N \subseteq \natur$ such that $h(S \cap [-t,t]^d)=k$ and $i_1=i_1(t),\ldots,i_k=i_k(t)$ for every $t \in N$. The latter yields $C_{i_1} \cap \cdots \cap C_{i_k}  \cap S = \emptyset$ and shows $h(S) \le k$. 

	Inequality \eqref{h:card:bound:ineq} is trivial if $S$ is empty or infinite. Assume that $S$ is nonempty and finite, say $S=\{s_1,\ldots,s_k\}$, where $k:=|S| \in \natur$. Consider arbitrary convex sets $C_1,\ldots,C_m \subseteq \real^d$ with $m \in \natur$ and $C_1 \cap \cdots \cap C_m \cap S = \emptyset$. For every $j \in [k]$ one can choose $C_{i_j}$ with $s_j \not\in C_{i_j}$. This yields $C_{i_1} \cap \cdots \cap C_{i_k} \cap S = \emptyset$ and shows $h(S) \le |S|$. 

	Inequality \eqref{intersect:conv:ineq} follows from the fact that every $(S\cap C)$-convex set is also $S$-convex.	

	For the proof of \eqref{image:ineq} we can assume $k:=h(S) \in \natur$, since otherwise the inequality is trivial. Consider arbitrary convex sets $C_1,\ldots,C_m \subseteq \phi(\real^d)$ with $C_1 \cap \cdots \cap C_m \cap \phi(S)= \emptyset$. The latter implies $\phi^{-1}(C_1) \cap \cdots \cap \phi^{-1}(C_m) \cap S = \emptyset$. Since the sets $\phi^{-1}(C_1),\ldots,\phi^{-1}(C_m)$ are convex, there exist $i_1,\ldots,i_k \in [m]$ with $\phi^{-1}(C_{i_1}) \cap \cdots \cap \phi^{-1}(C_{i_k}) \cap S = \emptyset$. Thus $C_{i_1} \cap \cdots \cap C_{i_k} \cap \phi(S) = \emptyset$, which shows $h(\phi(S)) \le k$.	
\end{proof}

In the rest of this section we collect well-known facts about maximal $S$-free sets and the functional $f$.  If $L$ is a $d$-dimensional $S$-free polyhedron in $\real^d$ such that the relative interior of each facet of $L$ contains a point of $S$, then $L$ is maximal $S$-free.  Every $d$-dimensional $S$-free set can be extended to a maximal $S$-free set. That is, if $K$ is an $S$-free set, then there exists a maximal $S$-free set $L$ with $K \subseteq L$. This follows directly from Zorn's lemma. One can also give a different proof by adapting the approach from \cite[Proposition~3.1]{averkov-wagner-2012}. It is not hard to see that $f(S) = -\infty$ if and only if $\cl(S) = \real^d$. Furthermore, one can verify that 
\begin{equation} \label{f:prod:lin:space:eq}
	f(S \times \real^n) = f(S)
\end{equation}
for every $n \in \natur$. Equality \eqref{f:prod:lin:space:eq} follows from the fact that every $(d+n)$-dimensional $(S\times \real^n)$-free set $L$ is a subset of the $(S \times \real^n)$-free set $L' := L+ \{o\} \times \real^n$, where $L'$ can be represented as a Cartesian product of a $d$-dimensional $S$-free set and $\real^n$.

\section{Proofs of results on $f(S)$ and $h(S)$ for general sets $S$} \label{sect:general}

\subsection*{The inequality $f(S) \le h(S)$ (Theorem~\ref{main:thm}.\ref{part:f<=h}) and its consequences} \label{sect:f<=h}

In this section Theorem~\ref{main:thm}.\ref{part:f<=h} is derived as a consequence of the following lemma.

\begin{lemma} \label{key:lemma} 
	Let $S \subseteq \real^d$ and $k \in \natur$. Assume that every $d$-dimensional $S$-free  polyhedron $P$ is contained in an $S$-free polyhedron $Q$ with at most $k$ facets. Then every $d$-dimensional maximal $S$-free set is a polyhedron with at most $k$ facets.
\end{lemma}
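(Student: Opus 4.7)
The plan is to realize $L$ itself as a set-theoretic limit of $S$-free polyhedra with at most $k$ facets and then invoke maximality. First I would construct an inner polyhedral approximation of $L$: pick $x_0 \in \intr(L)$, a $d$-dimensional simplex $P_0 \subseteq L$ with $x_0 \in \intr(P_0)$, and a countable dense subset $\{y_j\}_{j \in \natur}$ of $L$, and set $P_n := \conv(P_0 \cup \{y_1,\ldots,y_n\})$. Each $P_n$ is then a $d$-dimensional polytope with $\intr(P_n) \subseteq \intr(L)$, so it is $d$-dimensional and $S$-free; by the density of $\{y_j\}$ in $L$, every $x \in L$ is the limit of some sequence $x_n \in P_n$. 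The hypothesis of the lemma then yields $S$-free polyhedra $Q_n \supseteq P_n$ with at most $k$ facets each.

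Next I would pass to a subsequential limit of the $Q_n$ at the level of their inequality descriptions. Write $Q_n = \setcond{x \in \real^d}{\sprod{a_{i,n}}{x} \le b_{i,n},\ i \in [k]}$ with $\|a_{i,n}\|=1$, repeating constraints if $Q_n$ has fewer than $k$ facets. The inclusion $x_0 \in Q_n$ forces $b_{i,n} \ge -\|x_0\|$, so the offsets are bounded below. Along a subsequence, $a_{i,n} \to a_i$ on the unit sphere, and each $b_{i,n}$ converges either to some $b_i \in \real$ or to $+\infty$; letting $I \subseteq [k]$ collect the indices with finite limit, set
\[
    Q := \setcond{x \in \real^d}{\sprod{a_i}{x} \le b_i,\ i \in I},
\]
which is a polyhedron with at most $k$ facets. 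A point $s \in \intr(Q) \cap S$ would strictly satisfy the inequalities indexed by $I$ and, by the divergence of the remaining offsets, would lie in $\intr(Q_n)$ for $n$ large, contradicting the $S$-freeness of $Q_n$; hence $Q$ is $S$-free. Similarly, for $x \in L$ any choice of $x_n \in P_n \subseteq Q_n$ with $x_n \to x$ and passage to the limit in $\sprod{a_{i,n}}{x_n} \le b_{i,n}$ for $i \in I$ gives $x \in Q$, so $L \subseteq Q$. Since $L$ is maximal $S$-free, this forces $L = Q$, a polyhedron with at most $k$ facets.

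The main obstacle is that $L$, and consequently the dominating polyhedra $Q_n$, may be unbounded, so compactness at the level of sets (for example, in the Hausdorff metric) is unavailable. The trick is to work with the facet inequalities individually: the normalized normals live in the compact unit sphere, while the offsets either stay bounded, giving honest limits, or diverge to $+\infty$, in which case the corresponding half-space becomes vacuous in the limit. This can only decrease the number of facets, so the bound $k$ is preserved.
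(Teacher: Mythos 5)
Your proof is correct and follows essentially the same route as the paper: an increasing inner polyhedral approximation of $L$, dominating $S$-free polyhedra $Q_n$ with at most $k$ facets, and a subsequential limit of their normalized inequality descriptions, followed by the verification that the limit polyhedron contains $L$ and is $S$-free. The only difference is cosmetic: the paper normalizes the combined vector $\sizetwocol{a_{i,n}}{b_{i,n}} \in \real^{d+1}$ to unit length so that the offsets cannot escape to infinity, whereas you normalize only the normals and discard the constraints whose offsets diverge to $+\infty$; both devices work (the paper also explicitly treats the degenerate case $Q_n=\real^d$, which in your setup is the only situation where ``repeating constraints'' is unavailable, but it forces $S=\emptyset$ and is trivial).
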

\begin{proof} Let $L$ be an arbitrary $d$-dimensional $S$-free set. It suffices to show that $L$ is contained in an $S$-free polyhedron with at most $k$ facets.
	We consider a sequence $(P_t)_{t=1}^{+\infty}$ of $d$-dimensional polytopes such that 
	\begin{equation} \label{P_n's:nested:rel}
		P_t \subseteq P_{t+1} \quad \forall t \in \natur
	\end{equation}
	and 
	\begin{equation} \label{P_n:approximate}
		\intr(L) = \bigcup_{t=1}^{+\infty} P_t.
	\end{equation}
	Such a sequence can be constructed as follows. Let $(z_t)_{t=1}^{+\infty}$ be any ordering of the rational points of $\intr(L)$. Then, for every $t \in \natur$, we define $P_t := \conv (\{z_1,\ldots,z_{t+d}\})$. With an appropriate choice of $z_1,\ldots,z_{d+1}$ the polytope $P_1$ and, by this, also every other polytope $P_t$ is $d$-dimensional. By the assumption, each $P_t$ is contained in an $S$-free polyhedron $Q_t$ having at most $k$ facets. Every $Q_t$ can be represented by
	\[
		Q_t  = \bigsetcond{x \in \real^d}{ \sprod{u_{1,t}}{x} \le \beta_{1,t},\ldots,\sprod{u_{k,t}}{x} \le \beta_{k,t}},
	\]
	where $u_{1,t},\ldots,u_{k,t} \in \real^d$, $\beta_{1,t},\ldots,\beta_{k,t} \in \real$. In the degenerate situation $Q_t=\real^d$ we let $u_{i,t} :=o$ and $\beta_{i,t}:=1$ for every $i \in [k]$. Otherwise we can assume $u_{i,t} \ne o$ for every $i \in [k]$. After an appropriate renormalization we assume that $\sizetwocol{u_{i,t}}{\beta_{i,t}} \in \real^{d+1}$ is a vector of unit Euclidean length, for every $i \in [k]$ and $t \in \natur$. By compactness of the unit sphere in $\real^{d+1}$, there exists an infinite set $N \subseteq \natur$ such that, for every $i \in [k],$ the vector $u_{i,t}$ converges to some vector $u_i \in \real^d$ and $\beta_{i,t}$ converges to some $\beta_i \in \real$, as $t$ goes to infinity over points of $N$. Clearly, $\sizetwocol{u_{i}}{\beta_{i}} \in \real^{d+1}$ is a vector of unit length for every $i \in [k]$. We define the polyhedron 
	\[
		Q:= \bigsetcond{x \in \real^d}{ \sprod{u_1}{x} \le \beta_1,\ldots,\sprod{u_k}{x} \le \beta_k}.
	\]
	Let us show $\intr(L) \subseteq Q$. Consider an arbitrary $x' \in \intr(L)$. One has $x' \in P_t \subseteq Q_t$ for all sufficiently large $t \in N$. Thus, for each $i \in [k]$, the inequality $\sprod{u_{i,t}}{x'} \le \beta_{i,t}$ holds if $t \in N$ is sufficiently large. Passing to the limit we get $\sprod{u_i}{x'} \le \beta_i$ for every $i \in [k]$. Hence $x' \in Q$. The inclusion $\intr(L) \subseteq Q$ implies $L \subseteq Q$. It remains to show that $Q$ is $S$-free. We assume that $Q$ is not $S$-free. Then there exists $x' \in S \cap \intr(Q)$. Taking into account that $u_i\ne o$ or $\beta_i \ne 0$ for every $i \in [k]$,
	\[
		\intr(Q)  = \bigsetcond{x \in \real^d}{\sprod{u_1}{x} < \beta_1, \ldots, \sprod{u_k}{x} < \beta_k}.
	\]

	 Consequently $\sprod{u_i}{x} < \beta_i$ for all $i \in [k]$ and thus $\sprod{u_{i,t}}{x'} < \beta_{i,t}$ for all $i \in [k]$ if $t \in N$ is sufficiently large. Hence $x' \in S \cap \intr(Q_t)$ for all sufficiently large $t \in N$, contradicting the fact that $Q_t$ is $S$-free. 
\end{proof}

\begin{proof}[Proof of Theorem~\ref{main:thm}.\ref{part:f<=h}] Without loss of generality let $S \ne \emptyset$ and $k:=h(S) < +\infty$ since otherwise the assertion is trivial. The condition $S \ne \emptyset$ implies $k > 0$. Let us verify the assumption of Lemma~\ref{key:lemma}.  Let $P \subseteq \real^d$ be an arbitrary $d$-dimensional $S$-free polyhedron. We have $S \ne \emptyset$ and thus $P \ne \real^d$. We represent $P$ by $P= H_1 \cap \cdots \cap H_m$, where $m \in \natur$ and $H_1,\ldots,H_m \subseteq \real^d$ are closed halfspaces. Then $\intr(H_1) \cap S,\ldots, \intr(H_m) \cap S$ are $S$-convex sets whose intersection is empty. By the definition of $h(S)$, there exist indices $i_1,\ldots,i_k \in [m]$ such that $\intr(H_{i_1}) \cap \cdots \cap \intr(H_{i_k}) \cap S = \emptyset.$ It follows that $P \subseteq Q:= H_{i_1} \cap \cdots \cap H_{i_k}$, where $Q$ is an $S$-free polyhedron with at most $k$ facets. We conclude by  Lemma~\ref{key:lemma} that every $d$-dimensional $S$-free set is contained in a polyhedron with at most $h(S)$ facets. Hence $f(S) \le h(S)$.
\end{proof}

\begin{remark} \label{moran:dey:from:doignon:rem} \thmheader{Deriving the result of Dey \& Mor\'an from Doignon's theorem}
	Let $C$ be a convex subset of $\real^d$.
	Having obtained Theorem~\ref{main:thm}.\ref{part:f<=h}, the inequality $f(C \cap \integer^d) \le 2^d$ of Dey \& Mor\'an follows directly from Doignon's theorem $h(\integer^d)=2^d$.  By  Theorem~\ref{main:thm}.\ref{part:f<=h}, $f(C \cap \integer^d) \le h(C \cap \integer^d)$ . By convexity of $C$, we have $h(C\cap \integer^d) \le h(\integer^d)$. Taking into account Doignon's theorem, we arrive at
	$f(C \cap \integer^d) \le h(\integer^d) = 2^d$.
\end{remark}

\begin{remark} \thmheader{A weak version of Theorem~\ref{mixed:integer:f:thm}}
	Let $d \in \natur, n \in \natur_0$ and let $C \subseteq \real^{d+n}$ be convex.
	We can use the equality $h(\integer^d \times \real^n) = (n+1) 2^d$, which was proved in \cite{arXiv:1002.0948}, to find an upper bound on $f((\integer^d \times \real^n) \cap C)$. Analogously to Remark~\ref{moran:dey:from:doignon:rem}, we get $f((\integer^d \times \real^n) \cap C) \le h((\integer^d \times \real^n) \cap C) \le h(\integer^d \times \real^n) = (n+1) 2^d$. The upper bound $(n+1) 2^d$ derived in this way is weaker than the bound $2^d$ provided by Theorem~\ref{mixed:integer:f:thm}.
\end{remark}

\subsection*{The equality $f(S)=h(S)$ for discrete sets (Theorem~\ref{main:thm}.\ref{part:f=h})}

Theorem~\ref{main:thm}.\ref{part:f=h} is proved by reducing the case of a general discrete set $S$ to the case of a finite $S$. The following lemma presents properties of $S$-free sets in the case of finite $S$.

\begin{lemma} \label{S:free:sets:for:finite:S:lem}
	Let $S$ be a finite subset of $\real^d$. Then the following statements hold:
	\begin{enumerate}[I.]
		\item $f(S) = h(S)$.
		\item Every maximal $S$-free set $L$ is a $d$-dimensional polyhedron such that the relative interior of each facet of $L$ contains a point of $S$.
	\end{enumerate}
\end{lemma}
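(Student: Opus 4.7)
The plan is to prove Part II first and then derive Part I from it together with Theorem~\ref{main:thm}.\ref{part:f<=h}. For an arbitrary maximal $S$-free set $L$, Part II has three ingredients to verify: (a)~$\dim L = d$; (b)~$L$ is a polyhedron; (c)~each facet of $L$ has a point of $S$ in its relative interior. Finiteness of $S$ will be invoked repeatedly via the elementary fact that a finite set of strictly positive numbers has a strictly positive minimum.

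For (a), if $\dim L < d$ then $\intr(L) = \emptyset$ and $L$ is trivially $S$-free, so it suffices to construct an $S$-free closed convex set strictly containing $L$. I would take a direction $n \in \real^d \setminus (\aff(L) - \aff(L))$ and form the slab $L_\eps := L + [0,\eps] \cdot n$. When $\dim L \le d-2$, the set $L_1$ has dimension at most $d-1$, hence empty interior in $\real^d$, and is trivially $S$-free. When $\dim L = d-1$ with $\aff(L) = \setcond{x}{\sprod{n}{x}=c}$, choosing $\eps$ smaller than the positive number
\[
	\min\setcond{\sprod{n}{s}-c}{s \in S,\ \sprod{n}{s} > c}
\]
(with the convention that an empty minimum is $+\infty$) ensures that $\intr(L_\eps) \subseteq \setcond{x}{c < \sprod{n}{x} < c + \eps}$, which is disjoint from $S$. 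Either way $L_\eps$ contradicts maximality. For (b), for each $s \in S$ the condition $s \notin \intr(L)$ together with the separation theorem furnishes a closed half-space $H_s \supseteq L$ with $s \notin \intr(H_s)$; the polyhedron $Q := \bigcap_{s \in S} H_s$ contains $L$, satisfies $\intr(Q) \cap S = \emptyset$, and is therefore $S$-free, so maximality forces $L = Q$. For (c), I would fix an irredundant representation $L = \bigcap_{i=1}^{m} H_i$ with $H_i = \setcond{x}{\sprod{u_i}{x} \le \beta_i}$ and $F_i = L \cap \bd(H_i)$, assume $\relint(F_1) \cap S = \emptyset$ for contradiction, and relax the first inequality by $\delta > 0$ to form $L^\delta$. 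A point $s \in S \cap (\intr(L^\delta) \setminus \intr(L))$ must satisfy $\sprod{u_i}{s} < \beta_i$ for $i \ge 2$ together with $\sprod{u_1}{s} \in [\beta_1,\beta_1+\delta)$; the equality case $\sprod{u_1}{s} = \beta_1$ places $s$ in $\relint(F_1) \cap S$, contradicting the assumption, while the strict case $\sprod{u_1}{s} > \beta_1$ is excluded once $\delta$ is smaller than $\min\setcond{\sprod{u_1}{s} - \beta_1}{s \in S,\ \sprod{u_1}{s} > \beta_1,\ \sprod{u_i}{s} < \beta_i\ \forall i \ge 2}$ (a positive number or $+\infty$). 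Thus $L^\delta$ is $S$-free and strictly larger than $L$, a contradiction.

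For Part I, Theorem~\ref{main:thm}.\ref{part:f<=h} already yields $f(S) \le h(S)$, so only the reverse inequality requires work. Part II implies that the number of facets of every $d$-dimensional maximal $S$-free polyhedron is at most $|S|$ (distinct facets have disjoint relative interiors, each containing a distinct point of $S$), hence $f(S)$ is attained by some maximal $S$-free polyhedron $L = \bigcap_{i=1}^{m} H_i$ with $m = f(S)$. Setting $C_i := \intr(H_i)$ gives convex sets with $\bigcap_{i=1}^{m} C_i = \intr(L)$ disjoint from $S$, while for every proper subset $I \subsetneq [m]$ and every $i \in [m] \setminus I$, any choice $s_i \in \relint(F_i) \cap S$ satisfies $s_i \in \intr(H_j) = C_j$ for all $j \ne i$, and in particular $s_i \in \bigcap_{j \in I} C_j \cap S$. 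Hence no $k$-subfamily of $\{C_1,\ldots,C_m\}$ has empty intersection with $S$ for $k < m$, which forces $h(S) \ge m = f(S)$. The subtle step is (c): one must guarantee that a single uniform choice of $\delta$ keeps all points of $S$ out of the relaxed interior, and this uniformity hinges on $|S| < \infty$ — the same feature underlying the positive-minimum argument used to dispose of the hyperplane case in (a).
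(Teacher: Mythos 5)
Your Part II argument is essentially sound: the one\-/directional thickening for low\-/dimensional $L$, the separation argument for polyhedrality, and the facet\-/relaxation argument all work, and you correctly identify finiteness of $S$ as the source of the positive minima that make the perturbations legal. (The paper obtains Part II differently, as a byproduct of its proof of Part I, but a direct route is legitimate.)

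The derivation of Part I, however, contains a direction error that invalidates it. You announce that ``only the reverse inequality requires work,'' i.e.\ $h(S) \le f(S)$, but what your witness family actually establishes is $h(S) \ge f(S)$: exhibiting \emph{one} family of $m$ $S$-convex sets with empty intersection, all of whose proper subfamilies meet, shows that the Helly number is at least $m$ --- it can never show that it is at most $m$. That is exactly the inequality already supplied by Theorem~\ref{main:thm}.\ref{part:f<=h}, so your argument proves $f(S) \le h(S)$ twice and never proves the converse; equality does not follow. To bound $h(S)$ from above by $f(S)$ one must show that \emph{every} family of $S$-convex sets with empty intersection admits a subfamily of size at most $f(S)$ with empty intersection. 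The paper does this by first reducing (via separation and \eqref{helly:intersect:eq}) to families of sets of the form $\setcond{x \in S}{\sprod{u}{x}<\beta}$, and then, given an arbitrary system of strict inequalities with no solution in $S$, deleting redundant constraints and raising each remaining right-hand side until it is ``blocked'' by a point of $S$; the surviving constraints define a $d$-dimensional maximal $S$-free polyhedron with one facet per constraint, so the surviving subsystem --- still infeasible over $S$ --- has at most $f(S)$ members. An argument of this ``for every family'' type is indispensable and is entirely missing from your proposal.
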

\begin{proof}
	It suffices to consider the nontrivial case $S \ne \emptyset$.
	By Theorem~\ref{main:thm}.\ref{part:f<=h}, $f(S) \le h(S)$. Thus, for showing $h(S)=f(S)$ one needs to verify $h(S) \le f(S)$. Let $A$ be an arbitrary $S$-convex set. Applying separation theorems for $\conv(A)$ and every point of $S \setminus A$, we see that $A$ can be represented as intersection of $S$ with finitely many open halfspaces. Taking into account \eqref{helly:intersect:eq}, the latter implies the equality
	$h(S) = h(\cF)$ for the family $\cF$ consisting of sets $\setcond{x \in S}{\sprod{u}{x} < \beta}$, where $u \in \real^d \setminus \{o\}$ and $\beta \in \real$. We will verify $h(\cF) \le f(S)$. Consider an arbitrary system of strict inequalities 
	\begin{align} \label{system}
		\sprod{u_i}{x} < \beta_i \qquad \forall i \in [m],
	\end{align}
	with $m \in \natur$, $u_1,\ldots,u_m \in \real^d \setminus \{o\}, \beta_1,\ldots, \beta_m \in \real$, such that \eqref{system} has no solution in $S$, that is,
	\begin{equation} \label{no:solution:in:S}
		\setcond{x \in S}{\sprod{u_i}{x} < \beta_i \ \forall i \in [m]} = \emptyset.
	\end{equation}
	For proving $h(\cF) \le f(S)$ it suffices to show that \eqref{system} has a subsystem with at most $f(S)$ inequalities which has no solution in $S$. For $j \in [m]$, we say that the $j$-th constraint of \eqref{system} is redundant if after the removal of this constraint the new system still has no solution in $S$, that is,
	\[
		\setcond{x \in S}{\sprod{u_i}{x} < \beta_i \ \forall i \in [m] \setminus \{j\}} = \emptyset.
	\]
	We say that the $j$-th constraint of the system \eqref{system}  is blocked if
	\[
		\setcond{x \in S}{\sprod{u_i}{x} < \beta_i \ \forall i \in [m] \setminus \{j\}, \ \sprod{u_j}{x}=\beta_j } \ne \emptyset.
	\]
	For every $j \in [m]$, the $j$-th constraint is either redundant or blocked by $S$ or otherwise $\beta_j < \beta_j'$, where $\beta_j' \in \real$ is given by
	\[
		\beta_j' := \min \setcond{x \in S}{\sprod{u_i}{x} < \beta_i \ \forall i \in [m] \setminus \{j\}}.
	\]
	If the $j$-th constraint is nonredundant and $\beta_j< \beta_j'$, then this constraint becomes blocked if we replace $\beta_j$ by $\beta_j'$. Note that the above operation preserves \eqref{no:solution:in:S} and, furthermore, every constraint which was previously blocked remains blocked. Removing all redundant constrains and consecutively making all the nonredundant constraints blocked we can modify every system \eqref{system} to a system 
	\begin{align} \label{fixed:system}
		\sprod{u_i}{x} < \gamma_i \qquad \forall i \in I,
	\end{align}
	with $I \subseteq [m]$ and $\gamma_i \ge \beta_i \ \forall i \in I$, such that \eqref{fixed:system} has no solution in $S$ and every constraint of \eqref{fixed:system} is blocked. The index set $I$ is nonempty since $S \ne \emptyset$. Every constraint of \eqref{fixed:system} is blocked, and so there exist  points $s_i \in S$ ($i \in I$) such that $\sprod{u_i}{s_i} = \gamma_i$ for all $i \in I$ and $\sprod{u_i}{s_j} < \gamma_i$ for all $i,j \in I$ with $i \ne j$. Consider the polyhedron $L = \setcond{x \in \real^d}{\sprod{u_i}{x} \le \gamma_i \ \forall i \in I}$. By construction, $L$ is $S$-free. Furthermore, $L$ is $d$-dimensional since for any $j \in I$ one has
	\[
		s_j - \eps u_j \in \setcond{x \in \real^d}{\sprod{u_i}{x} < \gamma_i \ \forall i \in I} = \intr(L)
	\]
	if $\eps>0$ is sufficiently small. From the properties of the points $s_i$ ($i \in I$) we conclude that $L$ has precisely $|I|$ facets and each facet of $L$ contains a point of $S$. Thus, $L$ is a $d$-dimensional maximal $S$-free set with $|I|$ facets. Hence $|I| \le f(S)$. It follows that the subsystem of \eqref{system} consisting of the inequalities indexed by $i \in I$ has no solution in $S$. This shows $h(\cF) \le f(S)$ and concludes the proof of $f(S) = h(S)$. 

	It remains to prove Part~II. Consider an arbitrary $S$-free polyhedron $P$ (not necessarily nonempty or $d$-dimensional). Then $P$ can be given by 
	\[
		P= \setcond{x \in \real^d}{\sprod{u_i}{x} \le \beta_i \ \forall i \in [m]}
	\]
	for some system \eqref{system} satisfying \eqref{no:solution:in:S}. Applying the arguments used in the proof of $f(S)=h(S)$ we see that $P$ is a subset of a $d$-dimensional maximal $S$-free polyhedron $L$ such that the relative interior of each facet of $L$ contains a point of $S$. This yields the second part of the assertion.
\end{proof}

\begin{proof}[Proof of Theorem~\ref{main:thm}.\ref{part:f=h}] Let $S \subseteq \real^d$ be discrete. In view of Lemma~ \ref{S:free:sets:for:finite:S:lem}, we restrict ourselves to the case of infinite $S$.
	The inequality $f(S) \le h(S)$ is provided by Theorem~\ref{main:thm}.\ref{part:f<=h}. It remains to show $h(S) \le f(S)$. Without loss of generality we assume $f(S)<+\infty$ since otherwise the assertion is trivial. Note that $h(S) \ge 1$ since $S$ is nonempty.

	We have $h(S) = \lim_{t \rightarrow +\infty} h(S_t)$ for $S_t := S \cap [-t,t]^d$. Thus, it suffices to show $h(S_t) \le f(S)$ for every $t \in \natur$. Let us fix an arbitrary $t \in \natur$ and let $k:=h(S_t)$. By Lemma~\ref{S:free:sets:for:finite:S:lem}.I,  $k=h(S_t)=f(S_t)$. Thus, there exists a $d$-dimensional maximal $S_t$-free polyhedron $L$ with $k$ facets. We observe that $L$ is not $S$-free in general. Let $F_1,\ldots,F_k$ be all facets of $L$.

	In the rest of the proof we proceed as follows. Using the fact that $S$ is discrete, one can deduce the existence of an $S$-free polyhedron $P$ contained in $L$ such that, for every $i \in [k]$, $P \cap F_i$ is $(d-1)$-dimensional and the relative interior of $P \cap F_i$ contains a point of $S$. It will be shown that every maximal $S$-free polytope containing $P$ has at least $k$ facets.

	By Lemma~\ref{S:free:sets:for:finite:S:lem}.II,
	\begin{equation} \label{every:relint:is:blocked:by:S_t}
		\relintr(F_i) \cap S_t \ne \emptyset \qquad \forall i \in [k].
	\end{equation}

	For each $i \in [k]$ we choose a point $x_i \in \relintr(F_i) \cap S_t$. With this choice one obtains $\conv (\{x_1,\ldots,x_k\}) \subseteq \intr(L) \cup \{x_1,\ldots,x_k\}$. Because $L$ is $S_t$-free, we get
	\[
		[-t,t]^d \cap S \cap \conv( \{x_1,\ldots,x_k\}) = S_t \cap \conv(\{x_1,\ldots,x_k\})  = \{x_1,\ldots,x_k\}.
	\]
	Since $S$ is discrete, we have
	\[
		[-t-\eps,t+\eps]^d \cap S = [-t,t]^d \cap S
	\]
	for a sufficiently small $\eps>0$. It follows that the polytope
	\[
		P := L \cap [-t-\eps,t+\eps]^d
	\]
	is $S$-free. By construction, for every $i \in [k]$, $F'_i:=F_i \cap [-t-\eps,t+\eps]^d$ is a facet of $P$ and $x_i\in \relintr(F'_i)$. There exists a maximal $S$-free set $Q$ with $P \subseteq Q$. Since we assume $f(S) <+\infty$, $Q$ is a polyhedron. For each $i \in [k]$, the $(d-1)$-dimensional polyhedron $F_i'$ is a subset of a facet of $Q$, since otherwise the point $x_i \in \relintr(F'_i)$ would be in the interior of $Q$. Let $i,j \in [k]$ and $i \ne j$. Since $F_i$ and $F_j$ are distinct facets of $L$, it follows that the sets $F_i'$ and $F_j'$ are subsets of distinct facets of $Q$. Hence $Q$ has at least $k$ facets. This yields $h(S_t) \le f(S)$ for every $t \in \natur$ and implies $h(S) \le f(S)$.
\end{proof}

\subsection*{The $\liminf$ theorem (Theorem~\ref{liminf:thm}) and its consequences} \label{sect:proof:IV}

\begin{proof}[Proof of Theorem~\ref{liminf:thm}]
	We first show \eqref{h:liminf:ineq}. Let $k:=\liminf_{t \rightarrow +\infty} h(S_t)$. It suffices to consider the case $0<k<+\infty$. There exists an infinite set $N \subseteq \natur$ such that $h(S_t)=k$ for every $t \in N$.

	Consider arbitrary convex sets $C_1,\ldots,C_m$ in $\real^d$ with $m \in \natur$ and assume that $C_1 \cap \cdots \cap C_m \cap S= \emptyset$. We show the existence of indices $i_1,\ldots,i_k \in [m]$ satisfying $C_{i_1} \cap \cdots \cap C_{i_k} \cap S = \emptyset$. By the definition of the Helly number for every $t \in N$ there exist $i_1(t),\ldots,i_k(t) \in [m]$ with $C_{i_1(t)} \cap \cdots \cap C_{i_k(t)} \cap S_t = \emptyset$. By the finiteness of $[m]$, there exist $i_1,\ldots,i_k \in [m]$ such that $i_1=i_1(t),\ldots,i_k=i_k(t)$ for all $t \in N'$, where $N'$ is an infinite subset of $N$. The monotonicity property $S_t \subseteq S_{t+1} \ \forall  t \in \natur$ implies $S = \bigcup_{t \in N'} S_t$. Hence, $C_{i_1} \cap \cdots \cap C_{i_k} \cap S = \emptyset$.

	We now show \eqref{f:liminf:ineq}. Let $k:= \liminf_{t \rightarrow +\infty} f(S_t)$. We restrict ourselves to the nontrivial case $k<+\infty$.  There exists an infinite set $M \subseteq \natur$ such that $f(S_t) = k$ for every $t \in M$. In the degenerate cases one can have $k=-\infty$ or $k=0$. If $f(S_t)=-\infty$, then $\cl(S_t)=\real^d$ and, consequently, $\cl(S) = \real^d$. This yields $f(S) = -\infty$. In the case $k=0$ the assertion is trivial since $S_t = \emptyset \ \forall t \in M$ and thus $S=\emptyset$. Let $k \in \natur$. We consider an arbitrary $d$-dimensional $S$-free set $L \subseteq \real^d$ and show that $L$ is a subset of an $S$-free polyhedron with at most $k$ facets. For every $t \in M$ there exists a maximal $S_t$-free set $Q_t$ with $L \subseteq Q_t$. Since $f(S_t)=k$, $Q_t$ is a polyhedron with at most $k$ facets. We proceed similarly to the proof of Lemma~\ref{key:lemma}. For every $t \in M$, $Q_t$ can be given by 
	\[
		Q_t = \bigsetcond{x \in \real^d}{\sprod{u_{1,t}}{x} \le \beta_{1,t},\ldots,\sprod{u_{k,t}}{x} \le \beta_{k,t}},
	\]
	where $u_{1,t},\ldots,u_{k,t}\in \real^d$, $\beta_{1,t},\ldots,\beta_{k,t} \in \real$ and, for every $i \in [k]$ and $t \in M$,  $\sizetwocol{u_{i,t}}{\beta_{i,t}} \in \real^{d+1}$ is a vector of unit Euclidean length.  By compactness, there exists an infinite $N \subseteq M$ such that, for every $i \in [k]$, $u_{i,t}$ converges to some $u_i \in \real^d$ and  $\beta_{i,t}$ converges to some $\beta_i \in \real$ as  $t$ goes to infinity over points of $N$. Consider the polyhedron
	\[
		Q = \bigsetcond{x \in \real^d}{\sprod{u_1}{x} \le \beta_1,\ldots,\sprod{u_k}{x} \le \beta_k}.
	\]
	Let us verify the inclusion $L \subseteq Q$. The inclusions $L \subseteq Q_t \ \forall t \in \natur$ imply $\sprod{x}{u_{i,t}} \le \beta_{i,t}$ for all $x \in L, i \in [k], t \in \natur$. Passing to the limit, as $t$ goes to infinity over points of $N$, we obtain $\sprod{x}{u_i} \le \beta_i$ for every $i \in [k]$. Hence $L \subseteq Q$. Let us show that $Q$ is $S$-free. For this it suffices to show that, for every $t \in \natur,$ $Q$ is $S_t$-free. Assume that $Q$ is not $S_t$-free.  Then there exists a point $x' \in S_t$ lying in $\intr(Q)$, that is, satisfying $\sprod{u_i}{x'}  < \beta_i$ for every $i \in [k].$ This points satisfies $\sprod{u_{i,t'}}{x'} < \beta_{i,t'}$ for every $i \in [k]$, where $t' \in N$, $t' \ge t$ is sufficiently large. We have $S_t \subseteq S_{t'}$, and so $x' \in S_{t'} \cap \intr(Q_{t'})$, which contradicts the fact that $Q_{t'}$ is $S_{t'}$-free.
\end{proof}

\begin{remark} \label{indep:proofs:rem} \thmheader{Alternative proofs of results of Doignon and Lov\'asz}  By Theorem~\ref{main:thm}.\ref{part:f=h}, $h(\integer^d)=f(\integer^d)$. This shows that the result $h(\integer^d) =2^d$ of Doignon and the result $f(\integer^d) = 2^d$ of Lov\'asz are equivalent. Let us give self-contained proofs of these two results. Let $k= f(\integer^d) = h(\integer^d)$.  The family of the $\integer^d$-convex sets $\{0,1\}^d \setminus \{z\}$ with $z \in \{0,1\}^d$ contains $2^d$ elements and has empty intersection. Every proper subfamily of this family has nonempty intersection. Hence $k=h(\integer^d) \ge 2^d$. For deriving $k=f(\integer^d) \le 2^d$ we first apply \eqref{f:liminf:ineq}, which yields $k \le \liminf_{t \rightarrow +\infty} f(S_t)$, where $S_t:= \integer^d \cap [-t,t]^d \ \forall t \in \natur$. Since $[-t,t]^d$ is bounded and convex, we get $f(S_t) \le 2^d$ by the parity argument, as explained in Section~\ref{sect:parity:arg}. Thus, $k \le 2^d$.

Note that, in contrast to the above approach, the existing proofs of $f(\integer^d)=2^d$ use tools from the geometry of numbers (see the sources \cite{arXiv:1110.1014} and \cite{MR2724071}, which use Minkowski's first fundamental theorem and Diophantine approximation, respectively).
\end{remark}

\section{Proofs of upper bounds on $f(S)$ and $h(S)$ for \\ structured sets $S$} \label{sect:structured}

\subsection*{Proof of the theorem on adding a convex set (Theorem~\ref{part:mink:sum})}

\begin{proof}[Proof of Theorem~\ref{part:mink:sum}]
	Let us show \eqref{h:mink:sum:ineq}. Let $n:=\dim(C)$. Consider an appropriate injective affine mapping $\psi : \real^n \rightarrow \real^d$ and an $n$-dimensional convex set $C' \subseteq \real^n$ such that $\psi(C') = C$. We introduce the mapping $\phi : \real^{d+n} \rightarrow \real^d$ by $\phi \sizetwocol{x}{y} = x+ \psi(y)$ for every $x \in \real^d$ and $y \in \real^n$. By construction, $\phi(S \times C' ) = S+C$. Applying
	the inequality
	\[
		h(S \times \real^n) \le (n+1) h(S),
	\]
	proven in \cite{arXiv:1002.0948}, together with \eqref{intersect:conv:ineq} and \eqref{image:ineq}, we obtain
	\begin{align*}
		h(S+C) & = h(\phi(S \times C')) \le h(S \times C') = h((S \times \real^n) \cap (\real^d \times C')) \\ & \le h(S \times \real^n) \le (n+1) h(S).
	\end{align*}

	Now we derive \eqref{f:mink:sum:ineq}. Assume $k:=f(S) < +\infty$, since otherwise the assertion is trivial. Consider an arbitrary $d$-dimensional $(C+S)$-free set $L \subseteq \real^d$, that is, $L$ is $d$-dimensional, closed, convex and $\intr(L) \cap (C+S) = \emptyset$. It suffices to show that $L$ is contained in a maximal $(C+S)$-free polyhedron with at most $k$ facets. 	Let us first show the equality
	\begin{equation}
		\intr(L) - C = \intr(L-C) \label{aux:eq}
	\end{equation}
	The left hand side is an open set, since it can be represented as the union of the open sets $\intr(L)-c$ with $c \in C$. Hence $\intr(L)-C=\intr(\intr(L)-C) \subseteq \intr(L-C)$. Let us show the converse inclusion $\intr(L-C) \subseteq \intr(L) -C$. Using the additivity of the operator $\relint$ (see \cite[\S6]{MR0274683}) and the $d$-dimensionality of $L$ we obtain $\intr(L-C) = \intr(L) - \relintr(C) \subseteq \intr(L) - C.$

	In view of \eqref{aux:eq}, we get
	\begin{align*}
		\intr(L)  \cap (C+S) & = \emptyset & &\Longleftrightarrow & (\intr(L) - C) \cap S &  = \emptyset &   &\Longleftrightarrow   & \intr(L-C) \cap S & = \emptyset.
	\end{align*}

	Thus, $\cl(L-C)$ is $S$-free and by this there exists a $d$-dimensional $S$-free polyhedron $P$ with at most $k$ facets and $L-C \subseteq P$. The polyhedron $P$ can be given by 
	\[
		P= \setcond{x \in \real^d}{\sprod{u_1}{x} \le \beta_1,\ldots,\sprod{u_k}{\beta_k} \le \beta_k},
	\]
	where $u_1,\ldots,u_k \in \real^d \setminus \{o\}$ and $\beta_1,\ldots,\beta_k \in \real$. One has 
	$\sprod{x  - c}{u_i} \le \beta_i$ for all $x \in L, c \in C$, $i \in [k]$. Hence
	\[
		L \subseteq P':= \setcond{x \in \real^d}{\sprod{u_1}{x} \le \beta_1',\ldots,\sprod{u_k}{x} \le \beta_k'}
	\]
	 where 
	\[
		\beta_i':=\beta_i + \inf_{c \in C} \sprod{u_i}{c} \qquad \forall i \in [k].
	\]
	It remains to show that $P'$ is $(C+S)$-free. Assume the contrary. Then one can find $x' \in S$ and $c' \in C$ such that $\sprod{u_i}{x' + c'} < \beta_i + \inf_{c \in C} \sprod{u_i}{c}$ for every $i \in [k]$. It follows that $\sprod{u_i}{x'} < \beta_i$ for every $i \in [k]$, and hence $P$ is not $S$-free, which is in contradiction to the choice of $P$.
\end{proof}

\subsection*{Proofs of Theorems~\ref{part:lattices} and \ref{mixed:integer:f:thm}} \label{sect:lattices}

In the proof of Theorem~\ref{part:lattices} we shall need properties of $d$-dimensional maximal $S$-free sets in the case that $S$ is a union of finitely many polyhedra. Such maximal $S$-free sets will be discussed in Lemma~\ref{polyhedral:S:lem}. For the proof of Lemma~\ref{polyhedral:S:lem} we shall need a result on separation of polyhedra, which we formulate in Lemma~\ref{fine:separation:polyh:lem}. Lemma~\ref{S:between:lem} is another auxiliary result used in the proof of Theorem~\ref{part:lattices}.

If $P \subseteq \real^d$ is a polyhedron, then every nonempty face $F$ of $P$ can be given as 
\[
	F= F(P,u) := \setcond{x \in P}{\sprod{u}{y} \le \sprod{u}{x}  \ \forall y \in P}
\]
for an appropriate $u \in \real^d$.  The notation $F(P,u)$ is used in Lemma~\ref{fine:separation:polyh:lem}.

\begin{lemma} \label{fine:separation:polyh:lem} 
	Let $P, Q \subseteq \real^d$ be polyhedra such that $\dim(P)=d$, $Q \ne \emptyset$ and $\intr(P) \cap Q = \emptyset$. Assume that for some facet $G$ of $P$ one has $\relintr (G) \cap Q   = \emptyset$. Then there exists a closed halfspace $H \subseteq \real^d$ such that $P \subseteq H$, $\relintr (G) \subseteq \intr (H)$, and $Q \cap \intr (H) = \emptyset$.
\end{lemma}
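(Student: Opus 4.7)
The plan is to produce the halfspace $H$ by selecting a separating pair $(v,\gamma)$ for $P$ and $Q$ whose outer normal $v$ is not a non-negative scalar multiple of $u$, where $u \in \real^d \setminus \{o\}$ and $\beta \in \real$ are fixed so that $P \subseteq H_0 := \setcond{x \in \real^d}{\sprod{u}{x} \le \beta}$ and $\aff(G) = \setcond{x \in \real^d}{\sprod{u}{x} = \beta}$. The key observation is that, since $G$ is a facet of the full-dimensional polyhedron $P$ with outer normal $u$, the inclusion $G \subseteq F(P,v)$ holds if and only if $v \in \setcond{cu}{c \ge 0}$; hence for any other $v$ one has $F(P,v) \cap \relintr(G) = \emptyset$, which combined with $\gamma := \max_{x \in P}\sprod{v}{x}$ yields $\sprod{v}{x} < \gamma$ for every $x \in \relintr(G)$, and together with the separation $\gamma \le \min_{x \in Q}\sprod{v}{x}$ delivers all three conclusions.

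I split into three cases according to the value of $\alpha := \inf_{x \in Q}\sprod{u}{x}$. If $\alpha > \beta$, I take $H = \setcond{x \in \real^d}{\sprod{u}{x} \le \gamma}$ for any $\gamma \in (\beta,\alpha)$: then $P \subseteq H_0 \subseteq \intr(H)$, so $\relintr(G) \subseteq \intr(H)$, while $Q \cap \intr(H) = \emptyset$ by the choice of $\gamma$. If $\alpha < \beta$, the standard convex separation theorem (applicable since $\intr(P) \cap Q = \emptyset$) produces a separating pair $(v,\gamma)$, and any such $v$ automatically avoids $\setcond{cu}{c \ge 0}$: indeed, $v = cu$ with $c > 0$ would force $c\beta = \max_{x \in P}\sprod{v}{x} \le \min_{x \in Q}\sprod{v}{x} = c\alpha$, contradicting $\alpha < \beta$, while $v=o$ does not separate at all.

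The main case is $\alpha = \beta$, where the hypothesis $\relintr(G) \cap Q = \emptyset$ is used in an essential way. Since $Q$ is polyhedral the infimum is attained, so $Q_0 := Q \cap \aff(G)$ is a nonempty polyhedron, disjoint from the relatively open convex set $\relintr(G) \subseteq \aff(G)$. Standard separation applied inside $\aff(G)$ yields $w \in u^\perp \setminus \{o\}$ and $\delta \in \real$ with $\sprod{w}{x} < \delta$ on $\relintr(G)$ and $\sprod{w}{x} \ge \delta$ on $Q_0$. I then set $v_\mu := u + \mu w$ and $\gamma_\mu := \max_{x \in P}\sprod{v_\mu}{x}$ for small $\mu > 0$. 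A linear-programming perturbation argument yields $\gamma_\mu = \beta + \mu \max_{x \in G}\sprod{w}{x}$ and $\min_{x \in Q}\sprod{v_\mu}{x} = \beta + \mu \min_{x \in Q_0}\sprod{w}{x}$ for all sufficiently small $\mu$. The separation inequality $\gamma_\mu \le \min_{x \in Q}\sprod{v_\mu}{x}$ then reduces to $\max_{x \in G}\sprod{w}{x} \le \delta \le \min_{x \in Q_0}\sprod{w}{x}$, which holds by construction of $(w,\delta)$, and $v_\mu \notin \setcond{cu}{c \ge 0}$ because $w$ is a nonzero element of $u^\perp$; the first paragraph then finishes the proof.

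The main obstacle lies in justifying the perturbation step rigorously when $P$ or $Q$ is unbounded: one must verify that $\sprod{v_\mu}{r} \le 0$ for $r$ in the recession cone of $P$ and $\sprod{v_\mu}{s} \ge 0$ for $s$ in the recession cone of $Q$, so that the optima are finite and are attained where claimed. These reduce to $\sprod{w}{r} \le 0$ for $r$ in the recession cone of $G$ and $\sprod{w}{s} \ge 0$ for $s$ in the recession cone of $Q_0$, which are in turn forced by the finiteness of $\sup_{x \in \relintr(G)}\sprod{w}{x} \le \delta$ and $\inf_{x \in Q_0}\sprod{w}{x} \ge \delta$ coming from the separation in $\aff(G)$.
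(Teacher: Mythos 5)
Your proof is correct, but it takes a genuinely different route from the paper's. The paper works with the Minkowski difference $P-Q$ (itself a polyhedron): if $o\notin P-Q$ it separates $o$ from $P-Q$ strictly, and if $o\in P-Q$ it chooses $u$ with $o\in\relintr(F(P-Q,u))$ and uses the identity $F(P-Q,u)=F(P,u)-F(Q,u)$ together with the additivity of $\relintr$ to conclude $F(P,u)\ne G$, hence $F(P,u)\cap\relintr(G)=\emptyset$, so the supporting halfspace $\bigsetcond{x}{\sprod{u}{x}\le\sup_P\sprod{u}{\dotvar}}$ already works --- no perturbation is needed. You instead case-split on the position of $Q$ relative to $\aff(G)$ and, in the critical case $\alpha=\beta$, perform a secondary separation inside $\aff(G)$ followed by the parametric perturbation $v_\mu=u+\mu w$. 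Your key observation (the normal cone of $P$ at points of $\relintr(G)$ is exactly the ray spanned by $u$, so any admissible normal outside that ray gives $F(P,v)\cap\relintr(G)=\emptyset$) is sound, and you correctly isolate and discharge the one delicate step: the identities $\gamma_\mu=\beta+\mu\max_G\sprod{w}{\dotvar}$ and $\min_Q\sprod{v_\mu}{\dotvar}=\beta+\mu\min_{Q_0}\sprod{w}{\dotvar}$ for small $\mu>0$ do require the recession-cone inequalities, and these do follow from the finiteness of the secondary optima (via a Minkowski--Weyl decomposition). Two cosmetic points: proper separation inside $\aff(G)$ only yields $\sprod{w}{x}\le\delta$ on $\relintr(G)$, not the strict inequality you state (e.g.\ when $G$ lies in the separating hyperplane), but only the non-strict bound $\max_G\sprod{w}{\dotvar}\le\delta$ is actually used; and for $d=1$ the case $\alpha=\beta$ is vacuous under the hypothesis $\relintr(G)\cap Q=\emptyset$, so the separation inside $\aff(G)$ always takes place in positive dimension. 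The trade-off is that the paper's argument is shorter and avoids parametric LP entirely (and, as the paper notes, can also be run purely algebraically via Motzkin's transposition theorem), whereas yours uses only the basic separation theorem at the cost of the perturbation bookkeeping.
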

\begin{proof}
	It is known that the set $P-Q$ is a polyhedron. 

	\emph{Case~1: $o \not\in P-Q$}. We can separate $o$ and $P-Q$ by a hyperplane. That is, there exists $u \in \real^d \setminus \{o\}$ such that $\sup_{x \in P-Q} \sprod{u}{x} <0$. It follows that $\sup_{x \in P} \sprod{u}{x} - \inf_{y \in Q} \sprod{u}{y} < 0$. We can define $H = \setcond{x \in \real^d}{\sprod{u}{x} \le \beta}$, where $\beta \in \real$ is any value satisfying $\sup_{x \in P} \sprod{u}{x} < \beta < \inf_{y \in Q} \sprod{u}{y}$. For $H$ as above one has $P \subseteq \intr(H)$ and $H \cap Q = \emptyset$.

	\emph{Case~2: $o \in P-Q$}. The set $P-Q$ is $d$-dimensional since $\dim (P) = d$ and $Q \ne \emptyset$. Observe that $o \in \bd(P-Q)$. In fact, otherwise we would have $o \in \intr(P-Q) = \intr(P) - \relintr(Q)$, which implies $\intr(P) \cap Q \ne \emptyset$ yielding a contradiction. Since $o \in \bd(P-Q)$, there exists $u \in \real^d \setminus \{o\}$ such that  $F(P-Q,u)$ is a face of $P-Q$ with $o \in \relintr (F(P-Q,u))$. It is known that $F(P-Q,u)=F(P,u)-F(Q,u)$. Taking into account the additivity of the $\relintr$ operator we obtain $\relintr(F(P-Q,u))=\relintr(F(P,u)-F(Q,u)) = \relintr(F(P,u)) - \relintr(F(Q,u))$. One has $F(P,u) \ne G$,  since otherwise $o \in \relintr(G) - Q$, and by this $\relintr(G) \cap Q \ne \emptyset$, which contradicts the assumptions. Therefore, we have $F(P,u) \cap \relintr(G) = \emptyset$. Let $\beta :=\sup_{x \in P} \sprod{u}{x} = \inf_{y \in Q} \sprod{u}{y}$ and $H := \setcond{x \in \real^d}{\sprod{u}{x} \le \beta}$. The assertion for $H$ can be verified in a straightforward manner.
\end{proof}

We also observe that Lemma~\ref{fine:separation:polyh:lem} can be proved algebraically using Motzkin's transposition theorem (a generalization of Farkas' lemma to systems involving strict as well as nonstrict linear inequalities; see \cite[Corollary~7.1k]{MR874114})

\begin{lemma} \label{polyhedral:S:lem}
	Let $d, k \in \natur$ and let $S := C_1 \cup \cdots \cup C_k$, where $C_1,\ldots,C_k \subseteq \real^d$ are nonempty polyhedra. Let $L$ be a $d$-dimensional maximal $S$-free set in $\real^d$. Then $L$ is a polyhedron and the relative interior of each facet of $L$ contains a point of $S$.
\end{lemma}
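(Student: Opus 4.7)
The plan is to obtain both conclusions from the maximality of $L$ combined with Lemma~\ref{fine:separation:polyh:lem}, used in two different ways.

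First I would show that $L$ is a polyhedron. For each $i \in [k]$, the convex sets $\intr(L)$ and $C_i$ are disjoint, so by a standard separation argument there is a closed halfspace $H_i \subseteq \real^d$ with $L \subseteq H_i$ and $C_i \cap \intr(H_i) = \emptyset$. Setting $L^* := H_1 \cap \cdots \cap H_k$, one has $L \subseteq L^*$ and $\intr(L^*) \cap C_i \subseteq \intr(H_i) \cap C_i = \emptyset$ for every $i$, so $\intr(L^*) \cap S = \emptyset$; thus $L^*$ is $S$-free. Maximality of $L$ forces $L = L^*$, exhibiting $L$ as a polyhedron (with at most $k$ facets).

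Next I would argue by contradiction that every facet $G$ of $L$ satisfies $\relintr(G) \cap S \ne \emptyset$. Suppose for some facet $G$ one has $\relintr(G) \cap C_i = \emptyset$ for every $i \in [k]$. Since $L$ is a $d$-dimensional polyhedron, $C_i$ is a nonempty polyhedron, and $\intr(L) \cap C_i = \emptyset$, Lemma~\ref{fine:separation:polyh:lem} (applied to $P := L$, $Q := C_i$) provides a closed halfspace $H_i'$ with $L \subseteq H_i'$, $\relintr(G) \subseteq \intr(H_i')$, and $C_i \cap \intr(H_i') = \emptyset$. Set $L' := H_1' \cap \cdots \cap H_k'$. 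By the same argument as in the previous paragraph, $L'$ is $S$-free, and $L \subseteq L'$. Moreover $\relintr(G) \subseteq \intr(H_i')$ for all $i$ yields $\relintr(G) \subseteq \intr(L')$, whereas $\relintr(G) \subseteq \bd(L)$ because $G$ is a facet of $L$. Hence $L \subsetneq L'$, contradicting the maximality of $L$ among $S$-free sets.

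The only delicate point is the application of Lemma~\ref{fine:separation:polyh:lem} in the second step: it requires $L$ itself to already be a polyhedron (so that $L - C_i$ is a polyhedron and the face-structure arguments apply), which is why the polyhedrality step must precede the facet argument. Once the two steps are in this order, everything else reduces to observing that intersections of the halfspaces remain $S$-free because each $C_i$ avoids the corresponding open halfspace, and to contrasting $\bd(L)$ with $\intr(L')$ to produce a proper enlargement.
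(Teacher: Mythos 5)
Your proposal is correct and follows essentially the same route as the paper: separate $L$ from each $C_i$ to exhibit $L$ as an intersection of $k$ halfspaces via maximality, then apply Lemma~\ref{fine:separation:polyh:lem} facet by facet to contradict maximality if some relative interior misses $S$. The extra detail you supply in the first step (and the remark on the order of the two steps) only makes explicit what the paper leaves implicit.
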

\begin{proof}
	For each $i \in [k]$, $L$ and $C_i$ can be separated by a hyperplane. In view of the maximality of $L$, it follows that $L$ is a polyhedron with at most $k$ facets. We consider an arbitrary facet $F$ of $L$ and show that $\relintr(F) \cap S \ne \emptyset$ arguing by contradiction. Assume that $\relintr (F) \cap S = \emptyset$, that is, $\relintr (F) \cap C_i = \emptyset$ for every $i \in [k]$. By Lemma~\ref{fine:separation:polyh:lem}, for every $i \in [k]$ there exists a closed halfspace $H_i$ in $\real^d$ such that $L \subseteq H_i$, $\relintr (F) \subseteq \intr (H_i)$ and $\intr (H_i) \cap C_i = \emptyset$. We define $P:= H_1 \cap \cdots \cap H_k$. By construction, $P$ is $S$-free. From the construction we also get $L \subseteq P$ and $\relintr (F) \subseteq \intr (P)$, which yields $L \varsubsetneq P$. This contradicts the maximality of the $S$-free set $L$.
\end{proof}

\begin{lemma} \label{S:between:lem}
	Let $(C_t)_{t=1}^{+\infty}$ be a sequence of convex sets in $\real^d$ and let $S=\bigcup_{t=1}^{+\infty} C_t$. Let $S' \subseteq \real^d$ be a set satisfying
	\[
		\bigcup_{t=1}^{+\infty} \relintr(C_t) \subseteq S' \subseteq \bigcup_{t=1}^{+\infty} \cl(C_t).
	\]
	Then $f(S)=f(S').$
\end{lemma}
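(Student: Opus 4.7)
The plan is to prove the stronger statement that a closed convex set $L \subseteq \real^d$ is $S$-free if and only if it is $S'$-free. Since $f(\cdot)$ is determined purely by the family of (maximal) free sets in $\real^d$, this at once yields $f(S) = f(S')$, including the degenerate cases $f \in \{-\infty, +\infty\}$.

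The technical ingredient I would isolate first is a topological fact about convex sets: for any nonempty convex $C \subseteq \real^d$ and any open $U \subseteq \real^d$, the three conditions
\[
 U \cap \relintr(C) = \emptyset, \quad U \cap C = \emptyset, \quad U \cap \cl(C) = \emptyset
\]
are pairwise equivalent. The chain of inclusions $\relintr(C) \subseteq C \subseteq \cl(C)$ handles three of the six implications. For the implication $U \cap C \ne \emptyset \Rightarrow U \cap \relintr(C) \ne \emptyset$, I would pick $x \in U \cap C$ and any $y \in \relintr(C)$ (which exists as $C$ is nonempty convex), and use the standard fact that $(1-\lambda)x + \lambda y \in \relintr(C)$ for every $\lambda \in (0,1]$; openness of $U$ then places a small-$\lambda$ point of this segment inside $U \cap \relintr(C)$. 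The remaining direction $U \cap \cl(C) \ne \emptyset \Rightarrow U \cap C \ne \emptyset$ is immediate, since each point of $\cl(C)$ has points of $C$ in every neighbourhood.

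Applying this lemma with $U := \intr(L)$ (open for every closed convex $L$) and with $C := C_t$ for each $t$ for which $C_t \ne \emptyset$ (empty $C_t$'s contribute nothing anywhere), one obtains for every closed convex $L \subseteq \real^d$ the chain
\begin{align*}
 \intr(L) \cap S = \emptyset
 & \Longleftrightarrow \intr(L) \cap C_t = \emptyset \text{ for every } t \in \natur \\
 & \Longleftrightarrow \intr(L) \cap \bigcup\nolimits_{t \in \natur} \relintr(C_t) = \emptyset \\
 & \Longleftrightarrow \intr(L) \cap \bigcup\nolimits_{t \in \natur} \cl(C_t) = \emptyset.
\end{align*}
The sandwich hypothesis $\bigcup_{t} \relintr(C_t) \subseteq S' \subseteq \bigcup_{t} \cl(C_t)$ squeezes $\intr(L) \cap S' = \emptyset$ between the last two equivalent conditions, so $L$ is $S$-free precisely when $L$ is $S'$-free.

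The two families of $d$-dimensional maximal free sets therefore coincide, and $f(S) = f(S')$ follows directly from the definition of $f$. I do not anticipate any serious obstacle: the argument is essentially topological bookkeeping, and the only minor points are discarding empty $C_t$'s and noting that the cases $f=\pm\infty$ are also automatically forced to agree, since the characterization depends only on the coinciding class of free sets.
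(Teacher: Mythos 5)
Your proposal is correct and takes essentially the same route as the paper: both reduce the lemma to showing that a closed convex set $L$ is $S$-free if and only if it is $S'$-free, using exactly the two facts that an open set disjoint from a convex set is disjoint from its closure, and that an open set meeting $\cl(C_t)$ must meet $\relintr(C_t)$ (the paper packages the latter as $\cl(\relintr(C_t))=\cl(C_t)$, you as the line-segment principle). No gaps.
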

\begin{proof}
	Let $L \subseteq \real^d$ be a $d$-dimensional closed, convex set. We show that $L$ is $S$-free if and only if $L$ is $S'$-free. One has
	\begin{align*}
		S \cap \intr(L) &= \emptyset & &\Longrightarrow & &C_t \cap \intr(L) = \emptyset  & &\forall t \in \natur \\
		& & &\Longrightarrow & &C_t  \subseteq \real^d \setminus \intr(L) & &\forall t \in \natur \\
		& & &\Longrightarrow & &\cl(C_t) \subseteq \real^d \setminus \intr(L) & &\forall t \in \natur \\
		& & &\Longrightarrow & &S' \subseteq \real^d \setminus \intr(L) & & \\
		& & &\Longrightarrow & & S' \cap \intr(L) = \emptyset. & &
	\end{align*}
	Conversely, 
	\begin{align*}
		S' \cap \intr(L) & = \emptyset & &\Longrightarrow & & \relintr(C_t) \cap \intr(L) = \emptyset & &\forall t \in \natur\\
		& & &\Longrightarrow & &\relintr(C_t) \subseteq \real^d \setminus \ \intr(L) & &\forall t \in \natur \\
		& & &\Longrightarrow & &\cl(\relintr(C_t)) \subseteq \real^d \setminus \intr(L) & &\forall t \in \natur \\
		& & &\Longrightarrow & & \cl(C_t) \subseteq \real^d \setminus \intr(L) & &\forall t \in \natur \\
		& & &\Longrightarrow & &S \subseteq \real^d \setminus \intr(L) & & \\
		& & &\Longrightarrow & &S \cap \intr(L) = \emptyset.
	\end{align*}
	Thus, we get the assertion.
\end{proof}

\begin{proof}[Proof of Theorem~\ref{part:lattices}]
	Let us prove \eqref{h:mink:sum:lat:ineq}. Since $D$ is convex we obtain $h((C+\Lambda) \cap D)) \le h(C+\Lambda)$. By \eqref{h:mink:sum:ineq} we have $h(C+\Lambda) \le (\dim(C)+1) h(\Lambda)$. Doignon's theorem yields $h(\Lambda) = 2^{\rank(\Lambda)}$. This implies \eqref{h:mink:sum:lat:ineq}.

	Now let us show \eqref{f:mink:sum:lat:ineq}. Let $S:= (C + \Lambda) \cap D$ and let $L \subseteq \real^d$ be a $d$-dimensional maximal $S$-free set. We prove that $L$ is a polyhedron with at most $2^{\rank(\Lambda)}$ facets. Without loss of generality let $S \ne \emptyset$. Consider the following two cases.

	\emph{Case~1: $C$ and $D$ are both polytopes.} In this case $S$ is a union of finitely many polytopes. By Lemma~\ref{polyhedral:S:lem} the set $L$ is a polyhedron. Let $F_1,\ldots,F_m$ be all facets of $L$, where $m \in \natur_0$. By Lemma~\ref{polyhedral:S:lem}, for every $i \in [m]$, the relative interior of $F_i$ contains a point of the form $x_i+y_i$, where $x_i \in \Lambda$, $y_i \in C$ and $x_i+y_i \in D$. If $m> 2^{\rank(\Lambda)}$, then by the parity argument (see Section~\ref{sect:parity:arg}) one has $\frac{1}{2}(x_i+x_j) \in \Lambda$ for some $1 \le i < j \le m$. Using the convexity of $C, D$ and $L$ and the fact that $x_i+y_i$ and $x_j+y_j$ lie in the relative interior of two different facets of $L$ we deduce $\frac{1}{2}((x_i+y_i)+(x_j+y_j)) \in \intr (L) \cap S$. Hence $L$ is not $S$-free, which is a contradiction. It follows that $m \le 2^{\rank(\Lambda)}$.

	\emph{Case~2: $C$ or $D$ is not a polytope.} We introduce a sequence $(z_i)_{i=1}^{+\infty}$ such that $\setcond{z_i}{i \in \natur} = \setcond{z \in \Lambda}{(z+C) \cap D \ne \emptyset}$. Then
	\[
		S = \bigcup_{i=1}^{\infty} Q_i,
	\]
	where $Q_i := (z_i+C) \cap D \ne \emptyset$.
	For each $Q_i$ we fix a countable, dense subset $\setcond{ q_{i,j} }{j \in \natur}$ of $\relint(Q_i)$. With this choice one has $\relintr (Q_i) = \conv(\setcond{q_{i,j}}{j \in \natur})$. For every $t \in \natur$ we define
	\[
		S_t:= (\Lambda + C_t ) \cap D_t,
	\]
	where
	\begin{align*}
		C_t & := \conv (\setcond{q_{i,j} - z_i}{i,j \in [t]}), \\
		D_t & := \conv (\setcond{q_{i,j}}{i,j \in [t]}).
	\end{align*}

	By construction, one has $C_t \subseteq C_{t+1} \subseteq C$ and $D_t \subseteq D_{t+1} \subseteq D$ for every $t \in \natur$. The latter also implies $S_t \subseteq S_{t+1} \subseteq S$ for every $t \in \natur$. We have
	\begin{align*}
		\bigcup_{i=1}^{\infty} \relint(Q_i) & = \bigcup_{i=1}^{\infty} \conv ( \setcond{q_{i,j}}{j \in \natur}) = \bigcup_{i=1}^{\infty} \bigcup_{t=i}^{\infty} \conv ( \setcond{q_{i,j}}{j \in [t]}) \\ & = \bigcup_{t=1}^{\infty} \bigcup_{i=1}^t \conv ( \setcond{q_{i,j}}{j \in [t]}) \subseteq \bigcup_{t=1}^{\infty} (\Lambda + C_t) \cap D_t = \bigcup_{t=1}^{\infty} S_t \subseteq S,
	\end{align*}
	where above we use the equality
	\begin{equation} \label{caratheodory:type:eq}
		\conv( \setcond{q_{i,j}}{j \in \natur}) = \bigcup_{t=i}^{\infty} \conv( \setcond{q_{i,j}}{j \in [t]}) \qquad \forall i \in \natur.
	\end{equation}
 	Note that \eqref{caratheodory:type:eq} follows directly from Carath\'eodory's theorem (see \cite[Theorem~1.1.4]{MR1216521}). 

	Let $S' := \bigcup_{t=1}^{\infty} S_t$. By Lemma~\ref{S:between:lem}, one has $f(S')=f(S)$.
	By  Theorem~\ref{liminf:thm}, $f(S') \le \liminf_{t \rightarrow +\infty} f(S_t)$. Applying the assertion obtained in Case~1 we get $f(S_t) \le 2^{\rank(\Lambda)}$ for every $t \in \natur$. The latter yields $f(S) = f(S') \le 2^{\rank(\Lambda)}$.
\end{proof}

\begin{proof}[Proof of Theorem~\ref{mixed:integer:f:thm}]
	Observing that $\integer^d \times \real^n = \integer^d \times \{o\} + \{o\} \times \real^n$, where $\integer^d \times \{o\}$ is a lattice and $\{o\} \times \real^n$ is a linear space (and thus, a convex set), we see that \eqref{mixed:integer:f:ineq} is a direct consequence of \eqref{f:mink:sum:lat:ineq}. The equality case $f(\integer^d \times \real^n) = 2^d$ follows from \eqref{f:prod:lin:space:eq} and $f(\integer^d) = 2^d$.
\end{proof}

\section*{Acknowledgements}

I thankfully acknowledge the help of the anonymuous referees in improving the structure of the manu\-script and presentation of some of the arguments (most notably, the compactness argument in the proofs of Lemma~\ref{key:lemma} and Theorem~\ref{liminf:thm}).  Moreover, the referees pointed out a mistake in the proof of  \eqref{mixed:integer:f:ineq} in an earlier version of this paper. This indication has been a source of motivation for my further research on the topic of this paper. As a result, the text of the manuscript has been extended with Theorems~\ref{main:thm}.\ref{part:f=h}, \ref{liminf:thm}, \ref{part:mink:sum}, and \ref{part:lattices}.


\providecommand{\bysame}{\leavevmode\hbox to3em{\hrulefill}\thinspace}
\providecommand{\MR}{\relax\ifhmode\unskip\space\fi MR }
\providecommand{\MRhref}[2]{%
  \href{http://www.ams.org/mathscinet-getitem?mr=#1}{#2}
}
\providecommand{\href}[2]{#2}

\end{document}